\def\NAT@def@citea{\def\@citea{\NAT@separator}}% Suppress spaces between citations using natbib.sty
\definecolor{blue(pigment)}{rgb}{0.2, 0.2, 0.6}
\newcommand{\numsection}[1]{\section{#1}\setcounter{equation}{0}}
\newcommand{\beqn}[1]{\begin{equation}\label{#1}}
\newcommand{\eeqn}{\end{equation}}
\newcommand{\tim}[1]{\;\; \mbox{#1} \;\;}
\theoremstyle{plain}% Theorem-like structures provided by amsthm.sty
\newtheorem{theorem}{Theorem}[section]
\newtheorem{lemma}[theorem]{Lemma}
\newcommand{\llem}[2]{\vspace{\baselineskip} 
\noindent\framebox[\textwidth]{\parbox{0.95\textwidth}{
\begin{lemma} \label{#1} \rm #2 \end{lemma} } } \vspace{\baselineskip} }
\newlength{\thmw}
\newcommand{\lbthm}[3]{\vspace{\baselineskip}\noindent\hbox{%
  \lower\fboxrule\hbox{\vbox{\hrule\hbox{\vrule \kern-\fboxrule \vbox{%
  \vspace{\fboxsep} \noindent\hspace{2\fboxsep}\parbox{\thmw}{
  \begin{theorem}\label{#1}{\rm #2}\end{theorem}\vspace{-\lastskip}}
  \hspace{\fboxsep}}\kern-\fboxrule \vrule }}}}\newpage \hbox{%
  \lower\fboxrule\hbox{\vbox{\hbox{\vrule \kern-\fboxrule \vbox{%
  \noindent\hspace{2\fboxsep}\parbox{\thmw}{\rm #3}\hspace{\fboxsep}
  \vspace{4\fboxsep}}\kern-\fboxrule \vrule }\hrule }}}\vspace{\baselineskip}
}
\newcounter{algo}[section]
\renewcommand{\thealgo}{\thesection.\arabic{algo}}
\newcommand{\algo}[3]{\refstepcounter{algo}
\begin{center}\begin{figure}[htpb]
\framebox[\textwidth]{
\parbox{0.95\textwidth} {\vspace{\topsep}
{\bf Algorithm \thealgo : #2}\label{#1}\\
\vspace*{-\topsep} \mbox{ }\\
{#3} \vspace{\topsep} }}
\end{figure}\end{center}}
\newcommand{\calD}{{\cal D}}
\renewcommand{\Re}{\hbox{I\hskip -2pt R}}
\newcommand{\eqdef}{\stackrel{\rm def}{=}}
\newcommand{\ii}[1]{\{1, \ldots, #1 \}}
\theoremstyle{defn}
\newtheorem{defn}{Definition}[section]
\theoremstyle{assumption}
\newtheorem{assumption}{Assumption}[section]
\theoremstyle{remark}
\begin{document}
\title{Stochastic Analysis of an Adaptive Cubic Regularisation Method under Inexact Gradient Evaluations and Dynamic Hessian Accuracy }

\author{\name{Stefania Bellavia\textsuperscript{a}\thanks{CONTACT: Stefania Bellavia, Email: stefania.bellavia@unifi.it}
and Gianmarco Gurioli\textsuperscript{b}}\affil{\textsuperscript{a}
Dipartimento di Ingegneria Industriale,
  Universit\`{a} degli Studi, Firenze, Italy; \textsuperscript{b} Dipartimento di Matematica e Informatica ``Ulisse Dini",
  Universit\`{a} degli Studi, Firenze, Italy.}
}

%\date{\documentdate}

%  The manuscript

\maketitle{}

\begin{abstract}
We here adapt an extended version of the adaptive cubic regularisation method with dynamic inexact Hessian information for nonconvex optimisation in \cite{IMA} to the stochastic optimisation setting. While exact function evaluations are still considered, this novel variant inherits the innovative use of adaptive accuracy requirements for Hessian approximations introduced in \cite{IMA} and additionally employs inexact computations of the gradient. Without restrictions on the variance of the errors, we assume that these approximations are available within a sufficiently large, but fixed, probability and we extend, in the spirit of \cite{CartSche17}, the deterministic analysis of the framework to its stochastic counterpart, showing that the expected number of iterations to reach a first-order stationary point matches the well known worst-case optimal complexity. This is, in fact, still given by $O(\epsilon^{-3/2})$, with respect to the first-order $\epsilon$ tolerance. Finally, numerical tests on nonconvex finite-sum minimisation confirm that using inexact first and second-order derivatives can be  beneficial  in terms of the computational savings.
\end{abstract}

\begin{keywords}
Adaptive cubic regularization methods; inexact derivatives evaluations; stochastic nonconvex optimization; worst-case complexity analysis; finite-sum minimization.
\end{keywords}
%\vskip 5 pt
%\noindent
%{\bf AMS subject classifications.} 49M37, 65K05, 68W40, 90C30.

\numsection{Introduction} Adaptive Cubic Regularisation (ARC) methods are Newton-type procedures for solving unconstrained optimisation problems of the form
\begin{eqnarray}
\min_{x \in \mathbb{R}^n}  f(x), \label{problem1}
\end{eqnarray}
in which $f:\mathbb{R}^n\rightarrow \mathbb{R}$ is a sufficiently smooth, bounded below and, possibly, nonconvex function. In the seminal work by \cite{NP} the iterative scheme of the method is based on the minimisation of a cubic model, relying on the Taylor series, for predicting the objective function values, and is a globally convergent second-order procedure. The main reason to consider the ARC framework in place of other globalisation strategies, such as Newton-type methods embedded into a linesearch or a trust-region scheme, lies on its optimal complexity. In fact, given the first-order $\epsilon$ tolerance and assuming Lipschitz continuity of the Hessian of the  objective function,  an $\epsilon$-approximate first-order stationary point is reached, in the worst-case, in at most $O(\epsilon^{-3/2})$ iterations, instead of the  $O(\epsilon^{-2})$ bound gained by trust-region and linesearch methods \cite{CGToint, CGT}. More in depth, 
an $(\epsilon,\epsilon_H)$-approximate first- and second-order critical point is found in at most 
$O(\max(\epsilon^{-3/2}, \epsilon_H^{-3}))$ iterations, where $\epsilon_H$ is the positive  prefixed  second-order optimality tolerance \cite{ ARC2, CGToint,CGTIMA, NP}. 
We observe that, in \cite{carmon_optcompl} it has been shown that the bound $O(\epsilon^{-3/2})$ for computing an $\epsilon$-approximate first-order stationary point is optimal among methods operating on functions with Lipschitz continuous Hessian.
Experimentally, second-order methods can be  more efficient than first-order ones on badly scaled and ill-conditioned problems, since they  take advantage of curvature information to easily escape from saddle points to search for local minima (\cite{review, CGToint,Roosta_2p}) and this feature is in practice quite robust to the use of inexact Hessian information. On the other hand, their per-iteration cost is expected to be higher than first-order procedures, due to the computation of the Hessian-vector products. Consequently, literature has recently focused on ARC variants with inexact  derivative information, starting from schemes employing Hessian approximations \cite{IMA, Cin, Roosta}
 though conserving optimal complexity.  ARC methods with inexact gradient and Hessian approximations and still preserving optimal complexity are given in  \cite{ BellGuriMoriToin19, CartSche17, kl,  Roosta_2p, Roosta_inexact, zhou_xu_gu}.
These approaches have mostly been applied to large-scale finite-sum minimisation problems
\begin{equation}
\label{finite-sum}
\min_{x\in\mathbb{R}^n} f(x)=\frac{1}{N}\sum_{i=1}^N{\varphi_i(x)},
\end{equation}
widely used in machine learning applications. In this setting, the objective function $f$ is the mean of $N$ component functions $\varphi_i:\mathbb{R}^n\rightarrow \mathbb{R}$ and, hence, the evaluation of the exact derivatives might be, for larger values of $N$, computationally expensive. In the papers cited above the derivatives approximations are required to fulfil given accuracy requirements and are computed by random sampling. The size of the sample is determined as to satisfy the prescribed accuracy with a sufficiently large prefixed probability
exploiting  the operator Bernstein inequality for tensors (see \cite{Tropp}). To deal with the nondeterministic aspects of these algorithms,  in \cite{CartSche17,zhou_xu_gu} probabilistic models are considered and it is proved that, in expectation, optimal complexity applies as in the deterministic case; 
 in \cite{IMA, BellGuriMoriToin19, Cin, Roosta, Roosta_inexact} high probability results are given and it is shown that the optimal complexity result  is restored in probability. Nevertheless, this latter analysis does not provide information on the behaviour of the method when the desired accuracy levels in derivatives approximations  are not fulfilled.
 With the aim of filling this gap, we here perform the stochastic analysis of  the framework in \cite{IMA}, where approximated Hessians are employed.
 To make the method  more general, inexactness is allowed  in   first-order information, too. The analysis aims at bounding the expected number of iterations required by the algorithm to reach a first-order stationary point, under the assumption that gradient and Hessian approximations are available within a sufficiently large, but fixed, probability, recovering optimal complexity in the spirit of \cite{CartSche17}. 
%We remark that, unlike \cite{BellGuriMoriToin19,kl, CartSche17}, the main feature of our approach
 %is to use a dynamic accuracy  control of the Hessian accuracy level  which does not involve implicit conditions. % as in \cite{kl,CartSche17}.

The rest of the paper is organised as follows.  In section 1.1 we briefly survey the related works and in section 1.2 we summarise our contributions.  In Section 2 we introduce a stochastic ARC algorithm with inexact gradients and dynamic Hessian accuracy and state the main assumptions on the stochastic process induced by the algorithm. Relying on several existing results and deriving some additional outcomes, Section 3 is then devoted to perform the complexity analysis of the framework, while Section 4 proposes a practical guideline to apply the method for solving finite-sum minimisation problems. Numerical results for nonconvex finite-sum minimisation problems are discussed in Section 5 and concluding remarks are finally given in Section 6.
\vskip 5pt
\noindent
{\bf Notations.} The Euclidean vector and matrix norm is denoted as $\|\cdot \|$.
Given the scalar or vector or matrix $v$, and the non-negative scalar $\chi$, 
we write $v=O(\chi)$ if there is a constant $g$ such that $\|v\| \le  g \chi$.
Given any set ${\cal{S}}$, $|{\cal{S}}|$ denotes its cardinality. As usual, $\mathbb{R}^+$ denotes the set of positive real numbers.

\subsection{Related works}
The interest in ARC methods with inexact derivatives has been  steadily increasing.  We are here interested in computable  accuracy requirements for gradient and  Hessian approximations, preserving optimal complexity of these procedures. 
Focusing on the Hessian approximation, in \cite{ARC2} it has been proved that optimal complexity is conserved provided that, at each iteration $k$,   the Hessian approximation $\overline{\nabla^2 f}(x_k)$ satisfies 
\begin{equation}
\label{condKL}
\|(\overline{\nabla^2 f}(x_k)-\nabla^2 f(x_k))s_k\|\le \chi \|s_k\|^2,
\end{equation}
where  ${\nabla^2 f}(x_k)$ denotes the true Hessian at $x_k$.
The method   in \cite{kl}, specifically designed to minimise finite-sum problems,  assume that  
$\overline{\nabla^2 f}(x_k)$   satisfies 
\begin{equation}\label{kl2}
\| \overline{\nabla^2 f}(x_k) -\nabla^2 f(x_k)\|\le \chi\|s_k\|
\end{equation}
 with $\chi$ a positive constant, leading to 
 \eqref{condKL}.  
 Unfortunately, the upper bound in use depends on the steplength $\|s_k\|$ which is unknown when forming the Hessian approximation $ \overline{\nabla^2 f}(x_k)$. 
Finite-differences versions  of ARC method have been   investigated in \cite{CGTfinitedifference}.  The Hessian approximation satisfies  \eqref{kl2} and its computation  requires an inner-loop to meet the accuracy requirement.
  This mismatch is circumvented, in practical implementations of the method in \cite{kl}, by taking the step length at the previous iteration. Hence, this approach is unreliable when the norm of the step varies significantly from an iteration to the other, as also noticed in the numerical tests of \cite{IMA}. To overcome this practical issue, Xu and others replace in \cite{Roosta} the 
  accuracy requirement  \eqref{kl2} with 
 \begin{equation}\label{H_roosta}
\| \overline{\nabla^2 f}(x_k) -\nabla^2 f(x_k)\|\le \chi\epsilon, 
\end{equation}
where $\epsilon$  is  the first-order  tolerance. This
provides them with $\|(\overline{\nabla^2 f}(x_k)-\nabla^2 f(x_k))s_k\|\le \chi \epsilon \|s_k\|$, used to prove optimal complexity. In this situation, the estimate $\overline{\nabla^2 f}(x_k)$ is practically computable, independently of the step length, but at the cost of a very restrictive accuracy requirement (it is defined in terms of the $\epsilon$ tolerance) to fulfil at each iteration of the method. %\noindent
We further note that, in \cite{Wang}, optimal complexity results for a cubic regularisation method employing the implementable  condition 
\begin{equation}
\|\overline{\nabla^2 f}(x_k)-\nabla^2 f(x_k)\|\le \chi \|s_{k-1}\|
\end{equation}
are given under the assumption that the constant regularisation parameter is greater than the Hessian Lipschitz constant. Then, the knowledge of the Lipschitz constant is assumed.  Such an assumption can be quite stringent, especially when minimising nonconvex objective functions. On the contrary, adaptive cubic regularisation frameworks get rid of the Lipschitz constant, trying to overestimate it by an adaptive procedure that is well defined provided that the approximated Hessian is accurate enough.
To our knowledge, accuracy requirements  depending on the current step, as those in \eqref{condKL}-\eqref{H_roosta}, are needed to prove that the step acceptance criterion is well-defined and the regularisation parameter is bounded above.

\noindent
Regarding the gradient approximation,  the accuracy requirement in \cite{CGTfinitedifference, kl}  has the following form
\begin{equation}
\label{gradient}
\|\overline{\nabla f}(x_k)-\nabla f(x_k)\|\le \mu \|s_k\|^2,
\end{equation}
where $\overline{\nabla f}(x_k)$ denotes the gradient approximation and  $\mu$ is a positive constant. Then, the accuracy requirement depends on the norm of the step again.

\noindent
In \cite{Roosta_inexact}, as for the Hessian approximation, in order to get rid of the norm of the step,
a  very  tight accuracy requirement in used as the absolute error has to be of the order of $\epsilon^2$ at each iteration, 
i.e.
\begin{equation}
\label{gradient_roosta}
\|\overline{\nabla f}(x_k)-\nabla f(x_k)\|\le \mu \epsilon^2.
\end{equation}
As already noticed, in  \cite{Roosta, Roosta_inexact}, a complexity analysis in high probability is carried out in order to cover the situation where  accuracy requirements \eqref{H_roosta} and \eqref{gradient_roosta} are satisfied only with a sufficiently large probability. While the behaviour of cubic regularisation approaches employing approximated  derivatives is analysed in expectation in
 \cite{CartSche17}, assuming that \eqref{condKL} and \eqref{gradient} are satisfied with high probability.
\noindent
In the finite-sum minimisation context, accuracy requirements   \eqref{condKL}, \eqref{kl2} and \eqref{gradient}  can be enforced with high probability by subsampling 
 via an inner iterative process. Namely, the approximated derivative  is computed using
 a predicted accuracy, the step $s_k$ is computed and, if the predicted accuracy is larger than the required accuracy, the predicted accuracy is progressively decreased (and the sample size 
 progressively  increased) until  the accuracy requirement is satisfied. 

\noindent
 The cubic regularisation variant proposed in  \cite{IMA} employs exact gradient and ensures condition \eqref{condKL}, avoiding the above vicious cycle, requiring that 
\begin{equation}
\label{AccDynH}
\|\overline{\nabla^2 f}(x_k)-\nabla^2 f(x_k)\|\le c_k,
\end{equation}
where the guideline for choosing $c_k$ is as follows:
\begin{equation} \label{ck}
c_k\le\left \{ \begin{array}{ll}
 c, \quad c>0,\qquad \qquad  ~~~ \mbox{if } \ \ \|s_k\|\ge 1, \\
  \alpha(1-\beta)\| \nabla f(x_k)\|, \quad    \mbox{if } \ \ \|s_k\|< 1,
\end{array}
\right.
\end{equation}
with $0\le \alpha<\frac23$ and $0<\beta<1$.
Note that, for a sufficiently large constant $c$, the accuracy requirement $c_k$ can be less stringent than $\epsilon$ when $\|s_k\|\ge 1$ or, otherwise, as long as $\alpha(1-\beta)\|\nabla f(x_k)\|>\epsilon$.
Despite condition \eqref{ck} still involves the norm of the step,  the accuracy requirement \eqref{AccDynH} can be implemented without requiring an inner loop (see, \cite{IMA} and Algorithm \ref{algo}). 

\noindent
We finally  mention  that regularisation methods employing inexact
derivatives  and also inexact function values are proposed in \cite{BellGuriMoriToin19} and the complexity analysis carried out  covers arbitrary optimality order and arbitrary degree of the available approximate derivatives. Also in this latter approach, the accuracy requirement in derivatives  approximation  depends on the norm of the step and  an inner loop is needed  in order to increase the accuracy and meet the accuracy requirements. A different approach based on the Inexact Restoration framework is given  in \cite{bkm} where, in the context of finite-sums problems, the sample size rather than the 
approximation accuracy is adaptively chosen.

\subsection{Contributions}

In light of the related works the main contributions of this paper are the following:
\vspace{0.1cm}
\begin{itemize}
\item   We generalise the method given in \cite{IMA}. In particular, we kept   the practical adaptive criterion \eqref{AccDynH}, which is implemented without including an inner loop, allowing  inexactness in the gradient as well.  Namely, inspired by \cite{BellGuriMoriToin19}, we require that the gradient approximation satisfies the following relative implicit  condition:
\begin{equation}
\label{gradient_nostro}
\|\overline{\nabla f}(x_k)-\nabla f(x_k)\|\le \zeta_k \|\overline \nabla f(x_k)\|^2,
\end{equation}
where $\zeta_k$ is an iteration-dependent nonnegative parameter.
Unlike \cite{CartSche17} and \cite{kl} (see \eqref{gradient}), this latter condition does not depend on the norm of the step. Thus, its practical implementation calls for an inner loop that can be   performed before the step computation and extra-computations of the step are not needed.
A detailed description of a practical implementation of this accuracy requirement in subsampling scheme for finite-sum minimisation is given  in Section 4.
%{\color{blue(pigment)}{On the other hand, enforcing \eqref{gradient} requires extra step computations, due to the inclusion of the step calculation inside the inner loop.}}
\vspace{0.1cm}
\item  We assume that the accuracy requirements \eqref{AccDynH} and \eqref{gradient_nostro} are satisfied with high probability and we perform, in the spirit of \cite{CartSche17}, the stochastic analysis of the resulting method, showing that 
the expected number of iterations needed to reach an $\epsilon$-approximate first-order critical point  is, in the worst-case, of the order of 
$\epsilon^{-3/2}$. This analysis also applies to the method given in \cite{IMA}.
\end{itemize}

\numsection{A stochastic cubic regularisation algorithm with inexact derivatives evaluations}
Before introducing our stochastic algorithm, we consider the following hypotheses on $f$.\\

\begin{assumption}
\label{Assf}
With reference to  problem \eqref{problem1}, the objective function $f$ is assumed to be:
\begin{itemize}
\item[(i)] bounded below by $f_{low}$, for all $x\in\Re^n$; \vskip 5pt 
\item[(ii)] twice continuously differentiable, i.e. $f\in\mathcal{C}^2(\mathbb{R}^n)$; \vskip 5pt 
\end{itemize}
Moreover,\vskip 5pt 
\begin{itemize}
\item[(iii)]  the Hessian is globally Lipschitz continuous with Lipschitz constant $L_H>0$,  i.e.,
\begin{equation}\label{LipHess}
%\|\nabla f(x)-\nabla f(y)\| &\le& L_g \| x-y\|,\label{Lipgrad}\\ $L_g>0$ and 
\qquad \|\nabla^2 f(x)-\nabla^2 f(y)\| \le L_H\| x-y\|,
\end{equation}
for all $x$, $y\in\mathbb{R}^n$.
\end{itemize}
\end{assumption}
\vspace{0.2cm}
\noindent
The iterative method we are going to introduce is, basically, the stochastic counterpart of an extension of the one proposed in \cite{IMA}, based on first and second-order inexact information. More in depth at iteration $k$, given the trial step $s$, the value of the objective function at $x_k+s$ is predicted by mean of a cubic model $m_k(x_k,s,\sigma_k)$ defined in terms of an approximate Taylor expansion of $f$ centered at $x_k$ with increment $s$, truncated to the second order, namely
\begin{equation}
\label{m}
m_k(x_k,s,\sigma_k)= f(x_k)+\overline{ \nabla f}(x_k)^T s+\frac{1}{2} s^T \overline{ \nabla^2 f}(x_k) s+\frac{\sigma_k}{3}\|s\|^3\eqdef  \overline T_2(x_k,s)+\frac{\sigma_k}{3}\|s\|^3,
\end{equation}
in which both the gradient $\overline{ \nabla f}(x_k)$ and the Hessian matrix $ \overline{ \nabla^2 f}(x_k)$ represent approximations of $ \nabla f(x_k)$ and  $\nabla^2 f(x_k)$, respectively. According to the basic ARC framework in \cite{ARC1}, the main idea is to approximately minimise, at each iteration, the cubic model and to adaptively search for a regulariser $\sigma_k$ such that the following overestimation property is satisfied:
\[
f(x_k+s)\le m_k(x_k,s,\sigma_k),
\]
in which $s$ denotes the approximate minimiser of $m_k(x_k,s,\sigma_k)$. Within these requirements, it follows that
\[
f(x_k)=m_k(x_k,0,\sigma_k)\ge m_k(x_k,s,\sigma_k)\ge f(x_k+s),
\] 
so that the objective function is not increased when moving from $x_k$ to $x_k+s$. To get more insight, the cubic model \eqref{m} is approximately minimised in the sense that the minimiser $s_k$ satisfies
\begin{eqnarray}
& & m_k(x_k,s_k,\sigma_k)<m_k(x_k,0,\sigma_k),\label{mdecr} \\
& & \| \nabla_s m_k(x_k,s_k,\sigma_k)\| \le \beta_k \|\overline{\nabla f}(x_k)\|, \label{tc}  
\end{eqnarray}
for all $k\ge 0$ and some $0\le \beta_k\le \beta$, $\beta \in [0,1)$. Practical choices for $\beta_k$ are, for instance, $\beta_k= \beta \min \left( 1, \frac{\| s_k\|^2}{\|\overline{\nabla f} (x_k)\|} \right)$ or $\beta_k= \beta \min(1,\|s_k\|)$ (see, e.g., \cite{IMA}), leading to
\begin{equation}
\label{tcsub}
\| \nabla_s m_k(x_k,s_k,\sigma_k)\| \le \beta \min \left( \| s_k\|^2, \|\overline{\nabla f} (x_k)\|  \right), 
\end{equation}
and
\begin{equation}
\label{tc.s}
 \| \nabla_s m_k(x_k,s_k,\sigma_k)\| \le \beta \min(1,\|s_k\|) \|\overline{\nabla f} (x_k)\|,
\end{equation}
respectively. We notice that, if the overestimation property $f(x_k+s)\le m_k(x_k,s,\sigma_k)$ is satisfied, the requirement \eqref{mdecr} implies that $f(x_k)=m_k(x_k,0,\sigma_k)> m_k(x_k,s,\sigma_k)\ge f(x_k+s)$, resulting in a decrease of the objective. The trial point $x_k+s_k$ is then used to compute the relative decrease \cite{Toint1}
 \beqn{rhokdef2}
 \rho_k = \frac{f(x_k) - f(x_k+s_k)}
               {\overline T_2(x_k,0)-\overline T_2(x_k,s_k)}.
 \eeqn
 If $\rho_k\ge \eta$, with $\eta\in(0,1)$ a prescribed decrease fraction, then the trial point is accepted, the iteration is declared successful, the regularisation parameter is decreased by a factor $\gamma$ and we go on recomputing the approximate model at the updated iterate; otherwise, an unsuccessful iteration occurs: the point $x_k+s_k$ is rejected, the regulariser is increased by a factor $\gamma$, a new approximate model at $x_k$ is computed and a new trial step $s_k$ is recomputed. 
At each iteration, the model $m_k(x_k,s,\sigma_k)$  involved  relies on inexact quantities, that can be considered as realisations of random variables. Hereafter, all random quantities are denoted by capital letters, while the use of small letters is reserved for their realisations. In particular, let us denote a random model at iteration $k$ as $M_k$, while we use the notation $m_k=M_k(\omega)$ for its realisation, with $\omega$ a random sample taken from a context-dependent probability space $\Omega$.  In particular, 
 we denote by $\overline{\nabla f}(X_k)$ and $\overline{\nabla^2 f}(X_k)$ the random variables for $\overline{\nabla f}(x_k)$ and $\overline{\nabla^2 f}(x_k)$, respectively. Consequently, the iterates $X_k$, as well as the regularisers $\Sigma_k$ and the steps $S_k$ are the random variables such that $x_k=X_k(\omega)$, $\sigma_k=\Sigma_k(\omega)$ and $s_k=S_k(\omega)$. 
 
 \noindent
 The focus of this paper is to derive the expected worst-case complexity bound to approach a first-order optimality point, that is, given a tolerance $\epsilon\in(0,1)$, the number of steps $\overline{k}$ (in the worst-case) such that an iterate $x_{\overline{k}}$ satisfying 
\begin{eqnarray}
 \|\nabla f(x_{\overline{k}})\|\le \epsilon\nonumber
 \end{eqnarray} 
is reached. 
To this purpose, after the description of the algorithm, we state the main definitions and hypotheses needed to carry on with the analysis up to the complexity result. Our algorithm is reported below. 
%is reported on page \pageref{algo}.'

\algo{algo}{Stochastic ARC algorithm with inexact gradient and dynamic Hessian accuracy}
{\vspace*{-0.3 cm}
\begin{description}
\item[Step 0: Initialisation.]
  An initial point $x_0\in\mathbb{R}^n$  and an initial regularisation parameter $\sigma_0>0$
  are given. The constants $\beta$, $\alpha$,  $\eta$, $\gamma$, $\sigma_{\textrm{min}}$ and $c$ are also given such that
\begin{eqnarray}
 0<\beta<1, ~ \alpha\in  \left[0, \displaystyle \frac 2 3\right), ~\sigma_{\min}\in (0, \sigma_0],~  0<\eta < \frac{2-3\alpha}{2}, ~\gamma>1,~c>0.\label{initialconsts}
\end{eqnarray}
  Compute $f(x_0)$ and set $k=0$,  ${\rm flag}=1$.

%\item[Step 1: Compute the optimality measure and check for termination. ]
%  Compute $\barphi_{f,q}^{\delta_{k-1}}(x_k,\omega_k)$. If \req{bar-term-q} holds
%  with $\delta = \delta_{{k-1}}$, terminate with the approximate
%  solution $x_\epsilon=x_k$.
\vspace{2mm}
 \item[Step 1: Gradient approximation. ] Compute an approximate gradient $\overline{\nabla f}(x_k)$ 
\vspace{2mm}
\item[Step 2: Hessian approximation (model costruction). ] 
If ${\rm flag}=1$ set  $c_{k}=c$, else set $c_{k}=\alpha(1-\beta)\|\overline{\nabla f}(x_{k})\|$.\\
Compute an approximate Hessian $\overline{\nabla^2f}(x_k)$  that satisfies condition \eqref{AccDynH}  with a prefixed probability. Form the model $m_k(x_k,s,\sigma_k)$ defined in \eqref{m}.

 \vspace{2mm}

\item[Step 3: Step calculation. ] Choose $\beta_k\le \beta$. Compute the step $s_k$ satisfying \eqref{mdecr}-\eqref{tc}.\vspace{2mm}

\item[Step 4: Check on  the norm of the trial step. ] If $\| s_k \|< 1$ and ${\rm flag}=1$  and $c>\alpha(1-\beta)\|\overline{\nabla f}(x_k)\|$ 
\begin{itemize}
\item [] set $x_{k+1}=x_k$, $\sigma_{k+1}=\sigma_k$, ${\rm flag}=0$\quad (\textit{unsuccessful iteration}) 
%\item[]  $\mathcal{U}^{(2)}=\mathcal{U}^{(2)}\cup\{k\}$,\quad (\textit{unsuccessful iteration} $k\in\mathcal{U}^{(2)}$)
\item[] set $k=k+1$ and go to Step $1$.
\end{itemize}
\vspace{2mm}
 
 \item[Step 5: Acceptance of the trial point and parameters update. ] Compute $f(x_k+s_k)$ and the relative decrease defined in \eqref{rhokdef2}.

If $\rho_k\ge \eta$
\begin{itemize}
\item[] define $x_{k+1}=x_k+s_k$, set $\sigma_{k+1} =
\max[\sigma_{\min},\frac{1}{\gamma} \sigma_k] $.\quad (\textit{successful iteration}) 

 \item[]   If $\|s_k \|\ge 1$ set   ${\rm flag}=1$, otherwise  set ${\rm flag}=0$.

% \left[\gamma_2\sigma_k,\gamma_3\sigma_k\right], & \mbox{if} ~\rho_k<\eta_1. \quad ~~~~~~~\textrm{(unsuccessful~ iteration)}

\end{itemize}
else
 \begin{itemize}
 \item[] define $x_{k+1}=x_k$, $\sigma_{k+1}=\gamma\sigma_k. \quad $  (\textit{unsuccessful iteration})
 \end{itemize}
  Set $k=k+1$ and go to Step $1$.
\end{description}
}
\noindent
Some comments on this algorithm are useful at this stage. We first note that the Algorithm \ref{algo} generates a random process
\begin{eqnarray}
\{X_k, S_k, M_k, \Sigma_k,C_k\},\label{sprocess}
\end{eqnarray}
where $C_k=c_k(\omega)$ refers to the random variable for the dynamic Hessian accuracy $c_k$, that  is adaptively  defined in Step 2 of Algorithm \ref{algo}. Since its definition relies on random quantities, $c_k$  constitutes a random variable too.
We recall that, in the deterministic counterpart given in \cite{IMA},  the Hessian approximation $\overline{\nabla^2 f}(x_k)$ computed at iteration $k$ has to satisfy the absolute accuracy requirement 
\eqref{AccDynH}. Here,  this condition is assumed to be satisfied  only with a certain probability (see,  e.g., Assumption \ref{AssAlg}).

\noindent
The main goal is thus to prove that, if $M_k$ is sufficiently accurate with a sufficiently high probability conditioned to the past, then the stochastic process preserves the expected optimal complexity.
 To this scope, the next section is devoted to state the basic probabilistic accuracy assumptions and definitions. In what follows, we use the notation $\mathbb{E}[X]$ to indicate the expected value of a random variable $X$. In addition, given a random event $A$, $Pr(A)$ denotes the probability of $A$, while $\mathbbm{1}_A$ refers to the indicator of the random event $A$ occurring (i.e. $\mathbbm{1}_A(a)=1$ if $a\in A$, otherwise $\mathbbm{1}_A(a)=0$). The notation $A^c$  indicates the complement of the event $A$.

%This will still be included in the case of ``approximate" models addressed in our analysis (see, e.g., Assumption  %As in [rif. IMA], the following assumption on the norms of the Hessian approximations defined in Algorithm \ref{algo} is still considered.\\

%\begin{assumption}
%For all $k\ge 0$ and some nonnegative constant $\kappa_B$, it holds that
%\[
%\|\overline{\nabla^2 f}(x_k)\|\le \kappa_B.
%\]
%\end{assumption}

\subsection{Main assumptions on the stochastic ARC algorithm} 
For $k\ge 0$, to formalise the conditioning on the past, let $\mathcal{F}_{k-1}^{M}$ denote the $\hat{\sigma}$-algebra induced by the random variables $M_0$, $M_1$,..., $M_{k-1}$, with $\mathcal{F}_{-1}^{M}=\hat{\sigma}(x_0)$.\\
We first consider the following definitions for measuring the accuracy of the model estimates.\\

\begin{defn}[Accurate model]
\label{AccIk}
A sequence of random models $\{M_k\}$ is said to be $p$-probabilistically sufficiently accurate for Algorithm \ref{algo}, with respect to the corresponding sequence $\{X_k,S_k,\Sigma_k,C_k\}$, if the event 
$I_k=I_k^{(1)}\cap I_k^{(2)}\cap  I_k^{(3)}$, with
\begin{eqnarray}
I_k^{(1)}&=&\left\{\|\overline{\nabla f}(X_k)-\nabla f(X_k)\| \leq \kappa (1-\beta)^2 \left(\frac{\|\overline{\nabla f}(X_k) \|}{\Sigma_k}\right)^2,\quad \kappa>0\right\} , \label{AccG}\\
I_k^{(2)}&=&\left\{\|\overline{\nabla^2 f}(X_k)-\nabla^2 f(X_k)\|\le C_k\right\}, \label{AccH}\\
I_k^{(3)}&=& \left\{\|\overline{\nabla f}(x_k)\|\le \kappa_g, \quad  \|\overline{\nabla^2 f}(x_k)\|\le \kappa_B, \quad  \quad  \kappa_g> 0,~ \kappa_B>0 \right \},  \label{BoundD}
\end{eqnarray}

satisfies 
\begin{eqnarray}
Pr(I_k|\mathcal{F}_{k-1}^{M})=\mathbb{E}[\mathbbm{1}_{I_k}|\mathcal{F}_{k-1}^{M}]\ge p.\label{ProbIk}
\end{eqnarray}
\end{defn}
\noindent
What follows is an assumption regarding the nature of the stochastic information used by Algorithm \ref{algo}.\\

\begin{assumption}\label{AssAlg}
We assume that the sequence of random models  $\{M_k\}$, generated by Algorithm \ref{algo}, is $p$-probabilistically sufficiently accurate for some sufficiently high probability $p\in(0,1]$.
\end{assumption}

\section{Complexity analysis of the algorithm}

For a given level of tolerance $\epsilon$, the aim of this section is to derive a bound on the
expected number of iterations $\mathbb{E}[N_{\epsilon}]$ which is needed, in the worst-case, to reach  an $\epsilon$-approximate first-order stationary point. Specifically,
$N_{\epsilon}$ denotes a random variable corresponding to the number of steps required by the process until  $\|\nabla f(X_k)\|\le \epsilon$ occurs for the first time, namely
\begin{equation}
\label{hittingtime}
N_{\epsilon}=\inf \{k\ge 0~|~ \|\nabla f(X_k)\| \le \epsilon\};
\end{equation}
indeed, $N_{\epsilon}$ can be seen as a stopping time for the stochastic process generated by Algorithm \ref{algo} (see \cite[Definition~2.1]{STR2}).
\noindent
The analysis follows the path of \cite{CartSche17}, but some results need to be proved as for the adopted accuracy requirements for gradient and Hessian and failures in the sense of  Step 4.
It is preliminarly useful to sum up a series of existing lemmas from \cite{CartSche17} and \cite{IMA} and to derive some of their suitable extensions, which will be of paramount importance to perform the complexity analysis of our stochastic method. These lemmas are recalled in the following subsection.

\subsection{Existing and preliminary results}

\noindent
We observe that each iteration $k$ of Algorithm \ref{algo} such that $\mathbbm{1}_{I_k}=1$ corresponds to an iteration of the ARC Algorithm $3.1$ in \cite{IMA}, before termination, except for the fact that in Algorithm \ref{algo} the model \eqref{m} is defined not only using inexact Hessian information, but also considering an approximate gradient. In particular, the nature of the accuracy requirement for the gradient approximation given by \eqref{AccG} is different from the one for the Hessian approximation, namely \eqref{AccH}. In fact, a realisation $c_k$ of the upper bound $C_k$ in \eqref{AccH}, needed to obtain an approximate Hessian $\overline{\nabla^2 f}(x_k)$, is determined by the mechanism of the algorithm and is available when forming the Hessian approximation $\overline{\nabla^2 f}(x_k)$.  On the other hand,  \eqref{AccG} is an implicit condition and can be practically gained computing the gradient approximation within a prescribed absolute accuracy level, that is eventually reduced to recompute the inexact gradient $\overline{\nabla f}(x_k)$; but, in contrast with \cite[Algorithm $4.1$]{CartSche17}, without additional step computation, which is performed only once per iteration at Step $3$ of Algorithm \ref{algo}. We will see that,  for any realisation of the algorithm, if the model is accurate, i.e.   $\mathbbm{1}_{I_k}=1$, then there exist $\delta \ge 0$ and $ \xi_k> 0$ such that
\begin{eqnarray}
\|(\overline{\nabla f}(x_k)-\nabla f(x_k))s_k\| \le \delta \|s_k\|^3, \qquad \|(\overline{\nabla^2 f}(x_k)-\nabla^2 f(x_k))s_k\| \le \xi_k \|s_k\|^2,\nonumber
\end{eqnarray}
which will be  fundamental to recover optimal complexity. At this regard, let us consider the following definitions and state the lemma below.\\

\begin{defn}\label{defset}
With reference to Algorithm \ref{algo}, for all $0\le k\le l$, $l\in\{0,...,N_{\epsilon}-1\}$, we define the events
\vspace{0.1cm}
\end{defn}
\begin{itemize}
\item $\mathcal{S}_k=\{\textrm{iteration~}k~\textrm{is successful}\}$;
\vspace{0.1cm}
\item $\mathcal{U}_{k,1}=\{\textrm{iteration~}k~\textrm{is unsuccessful}~\textrm{in~the~sense~of~Step}~5\}$;
\vspace{0.1cm}
\item $\mathcal{U}_{k,2}=\{\textrm{iteration~}k~\textrm{is unsuccessful}~\textrm{in~the~sense~of~Step}~4\}$.
\end{itemize}
\noindent
We underline that   if $k\in {\cal U}_{k,1}$ then $\rho_k<\eta$, while $k\in {\cal U}_{k,2}$ if and only if  $\| s_k \|< 1$,  ${\rm flag}=1$  and $c>\alpha(1-\beta)\|\overline{\nabla f}(x_k)\|$.
Moreover,  if $\rho_k<\eta$ and a failure in Step 4 does not occur, then $k\in {\cal U}_{k,1}$.

\llem{}{\label{Lemmagk}
Consider any realisation of Algorithm \ref{algo}. Then, at each iteration $k$ such that $\mathbbm{1}_{I_k^{(1)}\cap I_k^{(3)}}=1$ (accurate gradient and bounded inexact derivatives) we have
\begin{equation}
\label{uppboundkgrad}
\|\overline{\nabla f}(x_k)-\nabla f(x_k)\|\le \delta \|s_k\|^2,\qquad \delta \eqdef \kappa \left(\frac{\kappa_B}{\sigma_{\min}}+1\right)\max\left[\frac{\kappa_g}{\sigma_{\min}}, \frac{\kappa_B}{\sigma_{\min}}+1\right],
\end{equation}
and, thus,
\begin{eqnarray}
\|(\overline{\nabla f}(x_k)-\nabla f(x_k))s_k\|\le \delta \|s_k\|^3\label{keygrad}.
\end{eqnarray}
}
\begin{proof} Let us consider $k$ such that $\mathbbm{1}_{I_k^{(1)}\cap I_k^{(3)} }=1$. Using \eqref{tc} we obtain

\begin{eqnarray}
 \beta \|\overline{\nabla f}(x_k)\|&\ge& \|\nabla_s m(x_k, s_k, \sigma_k)\| =\left\| \, \overline{\nabla f}(x_k) + \overline{\nabla^2 f}(x_k) s_k +\sigma_k s_k \|s_k\| \,  \right\| \nonumber \\
%& \ge \|\nabla f(x_k) + B_k s_k\|-  \lvert\lvert \sigma_k s_k 
 & \ge&  \|\overline{\nabla f}(x_k) \| - \|\overline{\nabla^2 f}(x_k)\|\, \| s_k \| -\sigma_k  \| s_k\|^2. \label{dis}
% &\ge  \|\overline{\nabla f}(x_k) \|-(\kappa_B+\sigma_k) \|s_k \|,
 \end{eqnarray}
\noindent
We can then distinguish between two different cases. If $\|s_k\| \ge 1$, from \eqref{dis}   and  \eqref{BoundD} we have that
\[
\beta \|\overline{\nabla f}(x_k)\|\ge  \|\overline{\nabla f}(x_k) \| - \|\overline{\nabla^2 f}(x_k)\|\, \| s_k \|^2-\sigma_k  \| s_k\|^2 \ge  \|\overline{\nabla f}(x_k) \| - (\kappa_B+\sigma_k)  \| s_k\|^2
\]
which is equivalent to
\[
\| s_k\|^2 \ge \frac{(1-\beta)\|\overline{\nabla f}(x_k)\|}{\kappa_B+\sigma_k}.
\]
Consequently, by  \eqref{AccG} and    \eqref{BoundD}
\begin{eqnarray}
\|\overline{\nabla f}(x_k)-\nabla f(x_k)\| &\le&  \kappa \left (\frac{1-\beta}{\sigma_k}\right )^2 \|\overline{\nabla f}(x_k)\|^2\le \frac{\kappa \kappa_g (1-\beta)^2  \|\overline{\nabla f}(x_k)\|}{\sigma_k^2\| s_k \|^2}\|s_k  \|^2\nonumber \\
&\le&  \kappa \kappa_g (1-\beta)  \frac{\kappa_B+\sigma_k}{\sigma_k^2}\| s_k  \|^2 \le 
 \kappa \frac{\kappa_g}{\sigma_{\min}}\left(  \frac{\kappa_B}{\sigma_{\min}}+1\right)\| s_k  \|^2 ,\label{uppboundagr0}
 \end{eqnarray}
where in the last inequality we have used that $\beta \in (0,1)$ and $\sigma_k\ge \sigma_{min}$.
If, instead, $\|s_k\| < 1$, inequality \eqref{dis}  and \eqref{BoundD} lead to
\[
\beta \|\overline{\nabla f}(x_k)\|\ge  \|\overline{\nabla f}(x_k) \| - \|\overline{\nabla^2 f}(x_k)\|\, \| s_k \|-\sigma_k  \| s_k\| \ge   \|\overline{\nabla f}(x_k) \|-(\kappa_B+\sigma_k)\|s_k\|,
\]
obtaining that
\begin{equation}
\label{undersk}
\| s_k\| \ge \frac{(1-\beta)\|\overline{\nabla f}(x_k)\|}{\kappa_B+\sigma_k}.
\end{equation}
Hence, by squaring both sides in the above inequality and using \eqref{AccG},  $\beta \in (0,1)$ and $\sigma_k\ge \sigma_{min}$, we obtain
\begin{eqnarray}
\|\overline{\nabla f}(x_k)-\nabla f(x_k)\| &\le&  \kappa \left(\frac{1-\beta}{\sigma_k}\right)^2 \|\overline{\nabla f}(x_k)\|^2= \frac{\kappa (1-\beta)^2  \|\overline{\nabla f}(x_k)\|^2}{\sigma_k^2\| s_k \|^2}\|s_k  \|^2\nonumber \\
&\le&  \kappa\left(\frac{\kappa_B+\sigma_k}{\sigma_k}\right)^2 \|s_k  \|^2 \le  \kappa\left(\frac{\kappa_B}{\sigma_{\min}}+1\right)^2 \|s_k  \|^2 .\label{uppboundagr00}
 \end{eqnarray}
 \noindent
Inequality \eqref{uppboundkgrad} then follows by virtue of  \eqref{uppboundagr0} and \eqref{uppboundagr00}, while \eqref{keygrad} stems from \eqref{uppboundkgrad} by means of the triangle inequality.
\end{proof}

\noindent The following Lemma is a slight modification of \cite[Lemma~3.1]{IMA}.

\llem{}{\label{LemmaCk}
Consider any realisation of Algorithm \ref{algo} and assume that $c\ge\alpha(1-\beta)\kappa_g$. Then, at each iteration $k$ such that  $\mathbbm{1}_{I_k^{(2)}\cap I_k^{(3)}}(1-\mathbbm{1}_{\mathcal{U}_{k,2}})=1$ (successful or unsuccessful in the sense of Step $5$, with accurate Hessian and bounded inexact derivatives) we have
\begin{equation}
\label{uppboundk}
\|\overline{\nabla^2f}(x_k)-\nabla^2 f(x_k)\|\le c_k\le \xi_k \|s_k\|,\qquad \xi_k\eqdef \max[c,\alpha(\kappa_B+\sigma_k)],
\end{equation}
and, thus,
\begin{eqnarray}
\|(\overline{\nabla^2f}(x_k)-\nabla^2 f(x_k))s_k\|\le \xi_k\|s_k\|^2\label{key}.
\end{eqnarray}
}
\begin{proof} Let us consider $k$ such that  $\mathbbm{1}_{I_k^{(2)} \cap I_k^{(3)}}(1-\mathbbm{1}_{\mathcal{U}_{k,2}})=1$. Algorithm \ref{algo} ensures that, if $\|s_k\|\ge 1$, then $c_k=c$ or
\begin{equation}
\label{ckgrad}
c_k=\alpha(1-\beta)\|\overline{\nabla f}(x_k)\|.
\end{equation}
Trivially, \eqref{ckgrad}, $\|s_k\|\ge 1$ and \eqref{BoundD} give
\begin{equation}
\label{uppboundagr1}
\|\overline{\nabla^2f}(x_k)-\nabla^2 f(x_k)\|\le c_k\le \max[c,\alpha(1-\beta)\|\overline{\nabla f}(x_k)\|]\le \max[c,\alpha(1-\beta)\kappa_g]\le c\|s_k\|,
\end{equation}
where we have considered the assumption $c\ge\alpha(1-\beta)\kappa_g$. On the other hand, Step $4$ guarantees the choice 
\begin{equation}
\label{ckgrad_bound}
c_k\le \alpha(1-\beta)\|\overline{\nabla f}(x_k)\|,
\end{equation}
 when $\|s_k\|< 1$. In this case, inequality \eqref{undersk} still holds. Thus, 
\begin{equation}
\label{uppboundagr2}
\|\overline{\nabla^2f}(x_k)-\nabla^2 f(x_k)\| \le  c_k  = \frac{c_k}{\| s_k \|}\|s_k  \| \le  \frac{c_k(\kappa_B+\sigma_k)}{(1-\beta)\| \overline{\nabla f}(x_k)  \|}\| s_k  \|\le \alpha (\kappa_B+\sigma_k)\|s_k\|,
\end{equation}
where  the last inequality is due to \eqref{ckgrad_bound}. Finally, \eqref{uppboundagr1} and \eqref{uppboundagr2} imply \eqref{uppboundk}, while \eqref{key} follows by  \eqref{uppboundk} using the triangle inequality.
\end{proof}

\noindent

\noindent
The next lemma bounds the decrease of the objective function on successful iterations, irrespectively of the satisfaction of the accuracy requirements for gradient and Hessian approximations.

\llem{}{Consider any realisation of Algorithm \ref{algo}. At each iteration $k$ we have
\begin{equation}
\label{Tdecr}
\overline T_2(x_k,0)-\overline T_2(x_k,s_k)> \frac{\sigma_k}{3}\|s_k\|^3\ge  \frac{\sigma_{\min}}{3}\|s_k\|^3>0.
\end{equation}
Hence, on every successful iteration $k$:
\begin{equation}\label{Tdecrsucc}
f(x_k)-f(x_{k+1})>  \eta \frac{\sigma_k}{3} \|s_k\|^3\ge \eta \frac{\sigma_{\min}}{3}\|s_k\|^3>0.
\end{equation}
}
\begin{proof}
We first notice that, by \eqref{mdecr}, we have that $\|s_k\|\neq 0$.  Moreover,   Lemma 2.1 in \cite{Toint1} coupled with (\ref{m})   yields \eqref{Tdecr}.
The second part of the thesis is easily proved taking into account that, if $k$ is successful, then \eqref{Tdecr} implies
\[
f(x_k)-f(x_{k+1})\ge \eta(\overline T_2(x_k,0)-\overline T_2(x_k,s_k))>\eta \frac{\sigma_k}{3}\|s_k\|^3.
\]
\end{proof}

\noindent
As a corollary, because of the fact that $x_{k+1}=x_k$ on each unsuccessful iteration $k$, for any realisation of Algorithm \ref{algo} we have that
\[
f(x_k)-f(x_{k+1})\ge 0.
\]
\noindent
We now show that, if the model is accurate, there exists a constant  $\overline \sigma>0$  such that an iteration is successful or unsuccessful in the sense of Step 4  ($\mathbbm{1}_{I_k}(1-\mathbbm{1}_{\mathcal{U}_{k,1}})=1$), whenever $\sigma_k\ge \overline \sigma$. In other words, it is an iteration at which the regulariser is not increased.

\llem{}{\label{Lemmasigmabar}
Let Assumption \ref{Assf} (ii) hold. Let $\delta$ be given in \eqref{uppboundkgrad}, assume $c\ge\alpha(1-\beta)\kappa_g$ and the validity of \eqref{LipHess}. For any realisation of Algorithm \ref{algo}, if the model is accurate and %$k$ is such that $\mathbbm{1}_{I_k}(1-\mathbbm{1}_{\mathcal{U}_{k,2}})=1$ (accurate in which the regulariser is updated) and
\begin{equation}\label{sigmabar}
\sigma_k\ge \overline{\sigma}\eqdef \max\left[ \frac{6\delta+3\alpha\kappa_B+L_H}{2(1-\eta)-3\alpha},\frac{6\delta+3c+L_H}{2(1-\eta)}\right]>0,
\end{equation}
then the iteration $k$ is successful or a failure in the sense of Step 4 occurs.}
\begin{proof} Let us consider an iteration $k$ such that $\mathbbm{1}_{I_k}(1-\mathbbm{1}_{\mathcal{U}_{k,1}})=1$ and the definition of $\rho_k$ in \eqref{rhokdef2}. Assume that a failure in the sense of Step 4 does not occur. 
 If $\rho_k-1\ge 0$, then iteration $k$ is successful by definition. We can thus focus on the case in which $\rho_k-1< 0$. In this situation, the iteration $k$ is successful provided that $1-\rho_k\le 1-\eta$. From \eqref{LipHess} and the Taylor expansion of $f$ centered at $x_k$ with increment $s$ it first follows that
\begin{equation} \label{Taylorfxksk}
 f(x_k+s) \le f(x_k)+ \nabla f(x_k)^\top s+\frac{1}{2}s^\top\nabla^2 f(x_k)s+\frac{L_H}{6}\|s\|^3.
\end{equation}
Therefore, since $\mathbbm{1}_{I_k}=1$,
\begin{eqnarray}
f(x_k+s_k)-\overline T_2(x_k,s_k)&\le&(\nabla f(x_k)-\overline{\nabla f}(x_k))^\top s_k+\frac12 s_k^\top(\nabla^2 f(x_k)-\overline{\nabla^2 f}(x_k))s_k+\frac{L_H}{6}\|s_k\|^3\nonumber\\
&\le&  \|\overline{\nabla f}(x_k)-\nabla f(x_k)\|\|s_k\|+\frac12\|\overline{\nabla^2 f}(x_k)-\nabla^2 f(x_k)\|\|s_k\|^2+\frac{L_H}{6}\|s_k\|^3\nonumber\\
%&\le& \left(\delta+\frac{L_H}{6}\right)\|s_k\|^3+\frac12\|\overline{\nabla^2 f}(x_k)-\nabla^2 f(x_k)\|\|s_k\|^2\nonumber\\
&\le&  \left(\delta+\frac{L_H}{6}+\frac{\xi_k}{2}\right)\|s_k\|^3,\label{uppf-T}
\end{eqnarray}
where we have used  \eqref{uppboundkgrad} and \eqref{uppboundk}. Thus, by \eqref{uppf-T} and \eqref{Tdecr},
\[
1-\rho_k=\frac{f(x_k+s_k)-\overline T_2(x_k,s_k)}{\overline T_2(x_k,0)-\overline T_2(x_k,s_k)}<\frac{ \left(6\delta+3\xi_k+L_H\right)\|s_k\|^3}{2\sigma_k\|s_k\|^3}=\frac{ 6\delta+3\xi_k+L_H}{2\sigma_k}.
\]
Depending on the maximum in the definition of $\xi_k$ in \eqref{uppboundk}, two different cases can then occur. If $\xi_k=c$, $1-\rho_k \le 1-\eta$, provided that
\[
\sigma_k\ge\frac{6\delta+3c+L_H}{2(1-\eta)}.
\]
Otherwise, if $c<\alpha(\kappa_B+\sigma_k)$, so that $\xi_k=\alpha(\kappa_B+\sigma_k)$, then
\[
1-\rho_k<\frac{ 6\delta+3\alpha(\kappa_B+\sigma_k)+L_H}{2\sigma_k}\le 1-\eta,
\]
provided that
\[
\sigma_k\ge\frac{6\delta+3\alpha \kappa_B+L_H}{2(1-\eta)-3\alpha}.
\]
In conclusion, iteration $k$ is successful if  \eqref{sigmabar} holds. Note that $\overline \sigma$
%\[
%\sigma_k\ge \max\left[ \frac{6\delta+3\alpha\kappa_B+L_H}{2(1-\eta)-3\alpha},\frac{6\delta+3c+L_H}{2(1-\eta)}\right],
%\]
is a positive lower bound for $\sigma_k$ because of the ranges for the values of $\eta$ and $\alpha$ in \eqref{initialconsts}.
\end{proof}

\noindent
Using some of the results from the proof of the previous lemma, we can now prove the following, giving a crucial relation between the step length $\|s_k\|$ and the true gradient norm $\|\nabla f(x_k+s_k)\|$ at the next iteration.

\llem{}{\label{Lemmapass}Let Assumption \ref{Assf} (ii)-(iii) hold and assume $c\ge\alpha(1-\beta)\kappa_g$. For any realisation of Algorithm \ref{algo}, at  each iteration $k$ such that $\mathbbm{1}_{I_k}(1-\mathbbm{1}_{\mathcal{U}_{k,2}})=1$ (accurate in which the iteration is successful or a failure in the sense of Step 5 occurs), we have
\begin{equation}\label{passtoeps}
\|s_k \|\ge \sqrt{\nu_k\|\nabla f(x_k+s_k)\|},
\end{equation}
for some positive $\nu_k$, whenever $s_k$ satisfies (\ref{tcsub}).   Moreover, \eqref{passtoeps} holds even in case $s_k$ satisfies (\ref{tc.s}) provided that 
 there exists  $L_g>0$ such that 
\begin{equation}\label{Lipgrad}
\|\nabla f(x)-\nabla f(y)\| \le L_g \| x-y\|,
\end{equation}
for all $x$, $y\in\mathbb{R}^n$.
 }
\begin{proof}
Let us consider an iteration $k$ such that $\mathbbm{1}_{I_k}(1-\mathbbm{1}_{\mathcal{U}_{k,2}})=1$. From the Taylor series of $\nabla f(x)$ centered at $x_k$ with increment $s$, 
and the definition of the model \eqref{m}, proceeding as in the proof of Lemma 4.1 in \cite{IMA} we obtain
%\[
%\nabla f(x_k+s)=\nabla f(x_k)+\int_0^1\nabla^2 f(x_k+\tau s )s d\tau.
%\]
%Consequently,
\begin{eqnarray}
\|\nabla f(x_k+s_k)-\nabla_s \overline T_2(x_k+s_k)\|%&=&\|\nabla f(x_k+s_k) -\overline{\nabla f}(x_k)-\overline{\nabla^2 f}(x_k)s_k \| \nonumber\\
&\le& \|\nabla f(x_k)-\overline{\nabla f}(x_k) \| + \| (\nabla^2 f(x_k)-\overline{\nabla^2 f}(x_k))s_k  \|\nonumber \\
&~~~+& \int_0^1 \|\nabla^2 f(x_k+\tau s)-\nabla^2 f(x_k)\| \|s_k\|\,d\tau\nonumber\\
&\le& \left(\delta+\xi_k+\frac{L_H}{2}\right)\|s_k\|^2,\label{normf-T}
\end{eqnarray}
where we have used \eqref{uppboundkgrad}, \eqref{key} and \eqref{LipHess}. Moreover, since $\nabla_s m(x_k, s_k, \sigma_k)=\nabla_s \overline T_2(x_k,s_k)+\sigma_k\| s_k\| s_k$, it follows:
%\[
%\begin{split}
%\nabla f(x_k+s_k)&=\nabla f(x_k+s_k)-\nabla_s T_2 (x_k,s_k)+\nabla_s T_2(x_k,s_k)+\sigma_k\| s_k\| s_k-\sigma_k\| s_k\| s_k  \\
%&=  \nabla f(x_k+s_k)-\nabla_s T_2(x_k,s_k)+ \nabla_s m(x_k, s_k, \sigma_k) -\sigma_k\| s_k\| s_k,
%\end{split}
%\]
%so that
\begin{eqnarray}
\label{normfkk} 
\|\nabla f(x_k+s_k) \| \le \|  \nabla f(x_k+s_k)-\nabla_s \overline T_2(x_k,s_k)\| + \|  \nabla_s m(x_k, s_k, \sigma_k)\| + \sigma_k\| s_k\|^2.
\end{eqnarray} 
As a consequence, the thesis follows from \eqref{normf-T}--\eqref{normfkk} with 
\begin{equation}
\label{zetak1}
\nu_k^{-1}=\left(\delta+\xi_k+\frac{L_H}{2}+\beta+\sigma_k\right)>0,
\end{equation}
when the stopping criterion \eqref{tcsub} is considered.
Assume now that \eqref{tc.s} is used for Step $3$ of Algorithm \ref{algo}. Inequalities \eqref{uppboundkgrad} and \eqref{Lipgrad} imply that
\begin{eqnarray}
\| \overline{\nabla f}(x_k)\| &\le&  \|\overline{\nabla f}(x_k)-\nabla f(x_k) \| + \| \nabla f(x_k) - \nabla f(x_k+s_k) \| + \| \nabla f(x_k+s_k) \| \nonumber\\
&\le& \delta\|s_k\|^2+L_g\|s_k\|+\| \nabla f(x_k+s_k) \|.\label{boundinexgrad}
\end{eqnarray}
By using \eqref{normf-T}--\eqref{normfkk} and 
plugging \eqref{boundinexgrad} into \eqref{tc.s}, we finally have
\[
\| \nabla f(x_k+s_k) \|(1-\beta)\le \left[ (1+\beta)\delta+\xi_k+\frac{L_H}{2}+ \beta L_g+\sigma_k \right]\|s_k\|^2,
\]
which is equivalent to \eqref{passtoeps}, with
\begin{equation}
\label{zetak2}
\nu_k=\frac{1-\beta}{ (1+\beta)\delta+\xi_k+L_H/2+ \beta L_g+\sigma_k }>0.
\end{equation}
\end{proof}

\noindent
It is worth noticing that the global Lipschitz continuity of the gradient, namely, \eqref{Lipgrad}, is  needed only when condition (\ref{tc.s}) is used in Step 3 of Algorithm \ref{algo}.
\noindent
We finally recall a result from \cite{CartSche17} that will be of key importance to carry out the complexity analysis addressed in the following two subsections.

\llem{}{\cite[Lemma~2.1]{CartSche17}
Let $N_{\epsilon}$ be the hitting time defined as in \eqref{hittingtime}. For all $k<N_{\epsilon}$, let $\{I_k\}$ be the sequence of events in Definition \ref{AccIk} so that \eqref{ProbIk} holds. Let $\mathbbm{1}_{W_k}$ be a nonnegative stochastic process such that $\hat{\sigma}(\mathbbm{1}_{W_k})\subseteq \mathcal{F}_{k-1}^M$, for any $k\ge 0$. Then,
\[
\mathbb{E}\left[ \sum_{k=0}^{N_{\epsilon}-1}\mathbbm{1}_{W_k}\mathbbm{1}_{I_k}\right]\ge p \mathbb{E}\left[ \sum_{k=0}^{N_{\epsilon}-1} \mathbbm{1}_{W_k}\right].
\]
Similarly, 
\[
\mathbb{E}\left[ \sum_{k=0}^{N_{\epsilon}-1}\mathbbm{1}_{W_k}(1-\mathbbm{1}_{I_k})\right]\le(1-p)\mathbb{E}\left[ \sum_{k=0}^{N_{\epsilon}-1} \mathbbm{1}_{W_k}\right].
\]
\label{CSlemma21}
}

\subsection{Bound on the expected number of steps with \boldmath$\Sigma_k\ge \overline{\sigma}$} 
In this section we derive an upper bound for the expected number of steps in the process generated by Algorithm \ref{algo} with $\Sigma_k\ge \overline{\sigma}$. Given $l\in\{0,...,N_{\epsilon}-1\}$,
for all $0\le k\le l$,  let us define the event
\[\Lambda_k=\{\textrm{iteration~}k~ \textrm{is such that} ~\Sigma_k< \overline{\sigma}\}\]
and let 
\begin{equation}
\label{defnsigma}
N_{\overline{\sigma}}\eqdef  \sum_{k=0}^{N_{\epsilon}-1}(1-\mathbbm{1}_{\Lambda_k}),\qquad N_{\overline{\sigma}}^{^C}\eqdef  \sum_{k=0}^{N_{\epsilon}-1}\mathbbm{1}_{{\Lambda}_k},
\end{equation}
be the number of steps, in the stochastic process induced by Algorithm \ref{algo}, with $\Sigma_k\ge \overline{\sigma}$ and $\Sigma_k< \overline{\sigma}$, before $N_{\epsilon}$ is met, respectively. In what follows we consider the validity of Assumption \ref{Assf}, Assumption \ref{AssAlg} and the following assumption on $\Sigma_0$.\\

\begin{assumption}\label{Asssigma0} With reference to the stochastic process generated by Algorithm \ref{algo} and the definition of $\overline{\sigma}$ in \eqref{sigmabar}, we assume that
\begin{equation}
\label{Sigma0}
\Sigma_0=\gamma^{-i} \overline{\sigma},
\end{equation}
for some positive integer $i$. We additionally assume that $c\ge\alpha(1-\beta)\kappa_g$.
\end{assumption}

\noindent
By referring to Lemma \ref{CSlemma21} and some additional lemmas from \cite{CartSche17}, we can first obtain an upper bound on $\mathbb{E}[N_{\overline{\sigma}}]$.  In particular, rearranging  \cite[Lemma~2.2]{CartSche17}, given a generic iteration $l$, we derive a bound on the number of iterations successful and unsuccessful  in the sense of Step 4, in terms of the overall number of iterations $l+1$. At this regard, we underline that in case of unsuccessful iterations in Step 4, the value of $\Sigma_k$ is not modified and  such an iteration occurs at most  once between two successful iterations (not necessary consecutive)  with the first one having the norm of the step not smaller than one or once before the first successful iteration of the process (since flag is initially 1).  In fact, a failure in the sense of Step 4 may occur only if 
flag=1; except for the first step, flag is reassigned only at the end of a successful iteration
and can be set to one only in case of successful iteration with  $\|s_k\|\ge 1$ (see Step 5 of Algorithm \ref{algo}), except for the first iteration. If the case 
flag = 1 and $\|s_k\| < 1$ occurs then flag is set to zero, preventing a failure in Step 4 at the subsequent
iteration, and it is not further changed until a subsequent successful iteration.

\llem{}{Assume that   $\Sigma_0< \overline{\sigma}$. Given $l\in\{0,...,N_{\epsilon}-1\}$, for all realisations of Algorithm \ref{algo}, 
\[
\sum_{k=0}^{l}\left(1-\mathbbm{1}_{\Lambda_k}\right) \mathbbm{1}_{\mathcal{S}_k\cup \mathcal{U}_{k,2}}\le \frac23 (l+1).
\]
\label{CSlemma22}
}
\begin{proof}
Each iteration $k$ such that $(1-\mathbbm{1}_{\Lambda_k})\mathbbm{1}_{\mathcal{S}_k\cup \mathcal{U}_{k,2}}=1$ is an iteration with $\Sigma_k\ge\overline{\sigma}$ that can be a successful iteration, leading to $\Sigma_{k+1}=\max[\sigma_{\min},\frac1\gamma \Sigma_k]$ ($\Sigma_k$ is decreased), or an unsuccessful iteration in the sense of Step $4$. In the latter case, $\Sigma_k$ is left unchanged ($\Sigma_{k+1}=\Sigma_k$).
Moreover, $\Sigma_k$  in successful/unsuccessful in the sense of Step $5$  iterations is decreased/increased by the same factor $\gamma$.
 More in depth, since $\Sigma_0< \overline{\sigma}$,  we have two possible scenarios. In the first one we have $\Sigma_k< \overline{\sigma}$, $k=0,\ldots,l$ and the thesis obviously follows. In the second scenario there exists at least one index $k$ such that  $\Sigma_k\ge \overline \sigma$ and at least one unsuccessful iteration at iteration $j\in \{0,\ldots,k-1\}$ in which $\Sigma_k$ has been increased by the factor $\gamma$.  
 In case  $\mathbbm{1}_{ \mathcal{U}_{k,2}}=1$,
 $\Sigma_k$ is left unchanged, flag is set to $0$ and $\mathbbm{1}_{ \mathcal{U}_{k+1,2}}=0$.   Then, at any iteration $j$ such that $\mathbbm{1}_{ \mathcal{U}_{j,1}}=1$
corresponds at most one   successful iteration and one unsuccessful iteration in the sense of Step 4, with $\Sigma_k\ge\overline{\sigma}$ and this yields   the  thesis.%
%number of successful iterations and unsuccessful iterations  in the sense of Step 4 with $\Sigma_k\ge\overline{\sigma}$ can be at most   $\frac{2}{3}$ of the performed iterations.}
\end{proof}
\noindent
We note that in the stochastic ARC method in \cite{CartSche17} each iteration can be successful or unsuccessful according to the satisfaction of the decrease condition $\rho_k\ge \eta$. On the contrary, in Algorithm \ref{algo} also failures in Step 4   may occur and this yields the bound $2/3 (l+1)$ in Lemma  \ref{CSlemma22}, while the corresponding bound in  \cite{CartSche17}  is $1/2 (l+1)$.
   
\noindent
As in \cite{CartSche17}, we note that  $\hat \sigma(\mathbbm{1}_{\Lambda_k})\subseteq \mathcal{F}_{k-1}^M$, that is the variable $\Lambda_k$ is fully determined by the first $k-1$ iterations of the Algorithm \ref{algo}. Then,
setting $l=N_{\epsilon}-1$ 
we can rely on Lemma \ref{CSlemma21} (with $W_k=\Lambda_k^c$) to deduce that
\begin{equation}
\label{CSlemma21f}
\mathbb{E}\left[ \sum_{k=0}^{N_{\epsilon}-1} (1-\mathbbm{1}_{\Lambda_k})\mathbbm{1}_{I_k} \right]\ge p \mathbb{E}\left[ \sum_{k=0}^{N_{\epsilon}-1} (1-\mathbbm{1}_{\Lambda_k})\right].
\end{equation}
\noindent
Considering the bound in Lemma \ref{CSlemma22} and the fact that  Lemma \ref{Lemmasigmabar} and the mechanism of Step $4$ in Algorithm \ref{algo} ensure that each iteration $k$ such that 
$\mathbbm{1}_{I_k}=1$ with $\sigma_k\ge \overline{\sigma}$ can be successful or unsuccessful in the sense of Step $4$ (i.e., $\mathbbm{1}_{\mathcal{S}_k\cup \mathcal{U}_{k,2}}=1$), we have that
\[
\sum_{k=0}^{N_{\epsilon}-1} (1-\mathbbm{1}_{\Lambda_k})\mathbbm{1}_{I_k}\le \sum_{k=0}^{N_{\epsilon}-1} (1-\mathbbm{1}_{\Lambda_k})\mathbbm{1}_{\mathcal{S}_k\cup \mathcal{U}_{k,2}}\le \frac23N_{\epsilon}.
\]
Taking expectation in the above inequality and recalling the definition of $N_{\overline{\sigma}}$ in \eqref{defnsigma}, from \eqref{CSlemma21f} we conclude that:
\begin{equation}
\label{bound_Ns}
\mathbb{E}[N_{\overline{\sigma}}] \le \frac{2}{3p}\mathbb{E}[N_{\epsilon}].
\end{equation}
The remaining bound for $ \mathbb{E}\big[N_{\overline{\sigma}}^{^C}\big]$ will be derived in the next section.
\subsection{Bound on the expected number of steps with \boldmath$\Sigma_k< \overline{\sigma}$} 
Let us now obtain an upper bound for $ \mathbb{E}\big[N_{\overline{\sigma}}^{^C}\big]$, with $N_{\overline{\sigma}}^{^C}$ defined in \eqref{defnsigma}. To this purpose, the following additional definitions are needed.
\vspace{0.2cm}
\begin{defn} Let ${\cal U}_{k,1}$,  ${\cal U}_{k,2}$ and ${\cal S}_k$ be as defined in Definition \ref{defset}.
With reference  to the process \eqref{sprocess} generated by Algorithm \ref{algo} let us define:
\vspace{0.2cm}
\begin{itemize}
\item the event $\overline{\Lambda}_k=\{\textrm{iteration~}k~ \textrm{is such that} ~\Sigma_k\le \overline{\sigma}\}$, i.e. $\overline{\Lambda}_k$ is the closure of $\Lambda_k$.
\vspace{0.2cm}
\item $M_1=\sum_{k=0}^{N_{\epsilon}-1}\mathbbm{1}_{\overline{\Lambda}_k}(1-\mathbbm{1}_{I_k})$: number of inaccurate iterations with $\Sigma_k\le \overline{\sigma}$;
\vspace{0.2cm}
\item $M_2=\sum_{k=0}^{N_{\epsilon}-1}\mathbbm{1}_{\overline{\Lambda}_k}\mathbbm{1}_{I_k}$: number of accurate iterations with $\Sigma_k\le \overline{\sigma}$;
\vspace{0.1cm}
\item $N_1=\sum_{k=0}^{N_{\epsilon}-1}\mathbbm{1}_{\overline{\Lambda}_k}\mathbbm{1}_{I_k}\mathbbm{1}_{\mathcal{S}_k}$: number of accurate successful iterations with $\Sigma_k\le \overline{\sigma}$;
\vspace{0.2cm}
\item $N_2=\sum_{k=0}^{N_{\epsilon}-1}\mathbbm{1}_{\overline{\Lambda}_k}\mathbbm{1}_{I_k}\mathbbm{1}_{\mathcal{U}_{k,2}}$: number of accurate unsuccessful iterations, in the sense of $\textrm{Step}~4$, with $\Sigma_k\le \overline{\sigma}$;
\vspace{0.2cm}
\item $N_3=\sum_{k=0}^{N_{\epsilon}-1}\mathbbm{1}_{\Lambda_k}\mathbbm{1}_{I_k}\mathbbm{1}_{\mathcal{U}_{k,1}}$: number of accurate unsuccessful iterations, in the sense of $\textrm{Step}~5$, with $\Sigma_k< \overline{\sigma}$;
\vspace{0.2cm}
\item $M_3=\sum_{k=0}^{N_{\epsilon}-1}\mathbbm{1}_{\overline{\Lambda}_k}(1-\mathbbm{1}_{I_k})\mathbbm{1}_{\mathcal{S}_k}$: number of inaccurate successful iterations, with $\Sigma_k\le \overline{\sigma}$;
\vspace{0.2cm}
\item $S=\sum_{k=0}^{N_{\epsilon}-1}\mathbbm{1}_{\overline{\Lambda}_k}\mathbbm{1}_{\mathcal{S}_k}$: number of successful iterations, with $\Sigma_k\le \overline{\sigma}$;
\vspace{0.2cm}
\item $H=\sum_{k=0}^{N_{\epsilon}-1}\mathbbm{1}_{\mathcal{U}_{k,2}}$: number of unsuccessful iterations in the sense of Step $4$;
\vspace{0.2cm}
\item $U=\sum_{k=0}^{N_{\epsilon}-1}\mathbbm{1}_{\Lambda_k}\mathbbm{1}_{\mathcal{U}_{k,1}}$: number of unsuccessful iterations, in the sense of Step $5$, with $\Sigma_k< \overline{\sigma}$.
\end{itemize}
\label{def2}
\end{defn}
\vspace{0.2cm}
\noindent
It is worth noting that an upper bound on  $\mathbb{E}\big[N_{\overline{\sigma}}^{^C}\big]$ is given, once an upper bound on $\mathbb{E}[M_1]+\mathbb{E}[M_2]$ is provided, since 
\begin{equation}
\label{PlanEN}
 \mathbb{E}\big[N_{\overline{\sigma}}^{^C}\big]\le \mathbbm{E}\left[ \sum_{k=0}^{N_{\epsilon}-1}\mathbbm{1}_{\overline{\Lambda}_k} \right]=\mathbbm{E}\left[ \sum_{k=0}^{N_{\epsilon}-1}\mathbbm{1}_{\overline{\Lambda}_k}(1-\mathbbm{1}_{I_k})+\sum_{k=0}^{N_{\epsilon}-1}\mathbbm{1}_{\overline{\Lambda}_k}\mathbbm{1}_{I_k}\right]=\mathbb{E}[M_1]+\mathbb{E}[M_2].
\end{equation}
where $M_1$ and $M_2$ are given in  Definition \ref{def2}.
Following \cite{CartSche17}, to bound $\mathbb{E}[M_1]$ we can still refer to the central Lemma \ref{CSlemma21} (with $W_k=\overline{\Lambda}_k$), of which the result stated below is a direct consequence.

\llem{}{\cite[Lemma~2.6]{CartSche17} 
With reference to the stochastic process \eqref{sprocess} generated by Algorithm \ref{algo} and the definitions of $M_1$, $M_2$ in Definition \ref{def2},
\begin{equation}
\label{EM1}
\mathbb{E}[M_1]\le \frac{1-p}{p} \mathbb{E}[M_2].
\end{equation}
}

\noindent
Concerning the upper bound for $\mathbb{E}[M_2]$ we observe that 
\begin{equation}
\label{defEM2}
\mathbb{E}[M_2]=\sum_{i=1}^3 \mathbb{E}[N_i]\le  \mathbb{E}[N_1]+ \mathbb{E}[N_2]+ \mathbb{E}[U].
\end{equation}
\noindent
In the following Lemma we provide upper bounds for $N_1$ and $N_2$, given in Definition \ref{def2}.

\llem{}{\label{LemmaN1N2} Let Assumption \ref{Assf} hold and that the stopping criterion  \eqref{tcsub} is used to perform each Step $3$ of Algorithm \ref{algo}. 
With reference to the stochastic process \eqref{sprocess} induced by  the Algorithm
  there exists $\kappa_s>0$ such that
\begin{equation}
\label{boundN1}
N_1\le  \kappa_s(f_0-f_{low})\epsilon^{-3/2}+1.
\end{equation}
Moreover, in case the stopping criterion (\ref{tc.s}) is used in Step 3, 
\eqref{boundN1} still holds provided that 
 there exists  $L_g>0$ such that  \eqref{Lipgrad} is satisfied 
for all $x$, $y\in\mathbb{R}^n$.

Finally, let Assumption \ref{Assf}  (i)-(ii) hold,  independently of the stopping criterion used to perform Step 3 
there exists $\kappa_u>0$ such that
\begin{equation}
\label{boundN2}
 N_2\le  \kappa_u(f_0-f_{low}).
 \end{equation}
 }
\begin{proof}
Taking into account that \eqref{Tdecrsucc} holds for each realisation of Algorithm \ref{algo}, \eqref{passtoeps} is valid for each realisation of Algorithm \ref{algo} with  $\mathbbm{1}_{I_k}(1-\mathbbm{1}_{\mathcal{U}_{k,2}})=1$, recalling that $f(X_k)=f(X_{k+1})$ for all $k\in\mathcal{U}_{k,1}\cup\mathcal{U}_{k,2}$ and setting $f_0\eqdef f(X_0)$, it follows:
\[
\begin{split}
f_0-f_{low}&\ge f_0-f(X_{N_{\epsilon}})=\sum_{k=0}^{N_{\epsilon}-1}(f(X_k)-f(X_{k+1}))\mathbbm{1}_{\mathcal{S}_k}\ge \sum_{k=0}^{N_{\epsilon}-1}\overbrace{\eta \frac{\sigma_{\min}}{3}\|S_k\|^3}^{> 0}\mathbbm{1}_{\mathcal{S}_k}\\
&\ge \sum_{k=0}^{N_{\epsilon}-2}\eta \frac{\sigma_{\min}}{3}\|S_k\|^3\mathbbm{1}_{\mathcal{S}_k}\mathbbm{1}_{I_k} \ge  \sum_{k=0}^{N_{\epsilon}-2} \eta\frac{\sigma_{\min}}{3} \nu_k^{3/2}\| \nabla f(X_{k+1})\|^{3/2} \mathbbm{1}_{\mathcal{S}_k}\mathbbm{1}_{I_k} \\
&\ge  \sum_{k=0}^{N_{\epsilon}-2} \eta\frac{\sigma_{\min}}{3} \nu^{3/2}\| \nabla f(X_{k+1})\|^{3/2} \mathbbm{1}_{\mathcal{S}_k}\mathbbm{1}_{I_k}\mathbbm{1}_{\overline{\Lambda}_k}\\
& \ge (N_1-1) \kappa_s^{-1}\epsilon^{3/2},
\end{split}
\]
in which $\nu_k$ is defined in  \eqref{zetak1}  when $s_k$ satisfies \eqref{tcsub} and in   \eqref{zetak2} when $s_k$ satisfies \eqref{tc.s} and 
\begin{equation}\label{defkappas}
\kappa_s^{-1}\eqdef \eta\frac{\sigma_{\min}}{3} \nu^{3/2}
\end{equation}
where
$$
\nu=\frac{1}{ \delta+\max[c,\alpha(\kappa_B+\overline{\sigma})]+L_H/2+ \beta+\overline{\sigma} }>0,
$$
in case  \eqref{tcsub}  is used and 
 $$
\nu= \frac{1-\beta}{ (1+\beta)\delta+\max[c,\alpha(\kappa_B+\overline{\sigma})]+L_H/2+ \beta L_g+\overline{\sigma} }>0,
$$
whenever \eqref{tc.s} is adopted.
Hence, \eqref{boundN1} holds.

%\noindent
Moreover,  an upper bound for $N_2$ can be obtained taking into account that,
as already noticed, an iteration $k\ge 1$ in the process such that $\mathbbm{1}_{\mathcal{U}_{k,2}}=1$ occurs at most once between two successful iterations with the first one having the norm of the trial step not smaller than $1$, plus at most once  before the first successful iteration in the process (since in Algorithm \ref{algo} flag is initialised at $1$). Therefore, by means of \eqref{Tdecrsucc},
\[
\begin{split}
f_0-f_{low}&\ge f_0-f(X_{N_{\epsilon}})=\sum_{k=0}^{N_{\epsilon}-1}(f(X_k)-f(X_{k+1}))\mathbbm{1}_{\mathcal{S}_k}\ge \sum_{\begin{small}\begin{array}{c} k=0\\ \|S_k\|\ge 1\end{array}\end{small}}^{N_{\epsilon}-1}(f(X_k)-f(X_k+S_k))\mathbbm{1}_{\mathcal{S}_k}\\
&\ge  \eta \frac{\sigma_{\min}}{3}  \sum_{\begin{small}\begin{array}{c} k=0\\ \|S_k\|\ge 1\end{array}\end{small}}^{N_{\epsilon}-1} \mathbbm{1}_{\mathcal{S}_k} \|S_k\|^3\ge \kappa_u^{-1}H,
\end{split}
\]
where $H$ denotes (see Definition \ref{def2}) the number of unsuccessful iterations in the sense of Step $4$. Then, since $H\ge N_2$, \eqref{boundN2} follows.

\end{proof}

\noindent
An upper bound for $U$ can be still derived using \cite[Lemma~2.5]{CartSche17},  provided that \eqref{Sigma0} holds. This is because the process induced by Algorithm \ref{algo} ensures that $\Sigma_k$ is decreased by a factor $\gamma$ on successful steps, increased by the same factor on unsuccessful ones in the sense of Step $5$ and \textit{left unchanged} if an unsuccessful iteration in the sense of Step $4$ occurs.% Therefore, recalling [ass. $\Sigma_0$], the total number of steps in which $\Sigma_k< \overline{\sigma}$ is increased is bounded from above by the total number of  steps where $\Sigma_k\le  \overline{\sigma}$ is decreased (successful steps) plus the number of steps required to increase $\Sigma_k$ from its initial value to $\overline{\sigma}$. Taking into account that the number of unsuccessful steps with $\Sigma_k< \overline{\sigma}$ where $ \Sigma_k$ is not changed is still upper bounded by $H$ in \eqref{boundN2}, \cite[Lemma~2.5]{CartSche17} can be modified as follows.

\llem{}{\cite[Lemma~2.5]{CartSche17}
Consider the validity of \eqref{Sigma0}. For any $l\in\{0,...,N_{\epsilon}-1\}$ and for all realisations of Algorithm \ref{algo}, we have that
\[
\sum_{k=0}^l \mathbbm{1}_{\Lambda_k}\mathbbm{1}_{\mathcal{U}_{k,1}}\le \sum_{k=0}^{l}\mathbbm{1}_{\overline{\Lambda}_k}\mathbbm{1}_{\mathcal{S}_k}+\log_{\gamma}\left( \frac{\overline{\sigma}}{\sigma_0} \right).
\]
}

\noindent
Consequently, considering $l=N_{\epsilon}-1$ and Definition \ref{def2},
\begin{equation}
\label{boundU}
U\le S+ \log_{\gamma}\left( \frac{\overline{\sigma}}{\sigma_0} \right)=N_1+M_3+\log_{\gamma}\left( \frac{\overline{\sigma}}{\sigma_0} \right).
\end{equation}
We underline that the right-hand side in \eqref{boundU} involves $M_3$, that has not been bounded yet. To this aim we can proceed as in \cite{CartSche17}, obtaining that
\begin{equation}
\label{boundEM3}
\mathbb{E}[M_3]\le \frac{1-p}{2p-1}\left(2\mathbb{E}[N_1] + \mathbb{E}[N_2] + \log_{\gamma}\left( \frac{\overline{\sigma}}{\sigma_0} \right)\right).
\end{equation}
In fact, recalling the definition of $M_3$ and \eqref{defEM2}, the inequality \eqref{EM1} implies that
\begin{equation}
\label{1boundEM3}
\mathbb{E}[M_3] \le \mathbb{E}[M_1]\le  \frac{1-p}{p} \mathbb{E}[M_2]  \le \frac{1-p}{p}\left(\mathbb{E}[N_1] + \mathbb{E}[N_2] + \mathbb{E}[U] \right).
\end{equation}
Indeed, taking expectation in \eqref{boundU} and plugging it into \eqref{1boundEM3},
\[
\mathbb{E}[M_3] \le \frac{1-p}{p}\left(2\mathbb{E}[N_1] + \mathbb{E}[N_2] +\mathbb{E}[M_3] + \log_{\gamma}\left( \frac{\overline{\sigma}}{\sigma_0} \right) \right),
\]
which yields\eqref{boundEM3}. The upper bound on $\mathbb{E}[M_2]$ then follows:
\begin{eqnarray}
\mathbb{E}[M_2]&\le&\mathbb{E}[N_1] + \mathbb{E}[N_2] + \mathbb{E}[U]  \le 2\mathbb{E}[N_1] + \mathbb{E}[N_2] +\mathbb{E}[M_3]+ \log_{\gamma}\left( \frac{\overline{\sigma}}{\sigma_0} \right)\nonumber\\
&\le& \left(\frac{1-p}{2p-1}+1\right)\left(2\mathbb{E}[N_1] + \mathbb{E}[N_2] + \log_{\gamma}\left( \frac{\overline{\sigma}}{\sigma_0} \right)\right)\nonumber\\
&=& \frac{p}{2p-1}\left(2\mathbb{E}[N_1] + \mathbb{E}[N_2] + \log_{\gamma}\left( \frac{\overline{\sigma}}{\sigma_0} \right)\right)\nonumber\\
&\le&  \frac{p}{2p-1}\left[ (f_0-f_{low})\left(2\kappa_s\epsilon^{-3/2}+\kappa_u\right)+ \log_{\gamma}\left( \frac{\overline{\sigma}}{\sigma_0} \right)+2 \right],\label{EM2}
\end{eqnarray}
in which we have used \eqref{defEM2}, \eqref{boundN2}, \eqref{boundN1}, \eqref{boundU} and \eqref{boundEM3}. 
Therefore, recalling \eqref{PlanEN} and \eqref{EM1}, we obtain that
\begin{equation}
\label{bound_Nsc}
 \mathbb{E}\big[N_{\overline{\sigma}}^{^C}\big]\le \frac{1}{p}\mathbb{E}[M_2]\le \frac{1}{2p-1}\left[ (f_0-f_{low})\left(2\kappa_s\epsilon^{-3/2}+\kappa_u\right)+ \log_{\gamma}\left( \frac{\overline{\sigma}}{\sigma_0} \right)+2  \right],
\end{equation}
where the last inequality follows from \eqref{EM2}.
We are now in the position to state our final result, providing the complexity of the stochastic method associated with Algorithm \ref{algo}, in accordance with the complexity bounds given by the deterministic analysis of an ARC framework with exact \cite{Toint1} and inexact \cite{ARC2, CGToint, CGTIMA, IMA, BellGuriMoriToin19} function and/or derivatives evaluations.\\

\begin{theorem} {\label{ThSComplexity}Let Assumptions \ref{Assf} and \ref{Asssigma0} hold. Assume that Assumption \ref{AssAlg} holds with $p>2/3$ and that the stopping criterion  \eqref{tcsub} is used to perform each Step $3$ of Algorithm \ref{algo}. Then, the hitting time $N_{\epsilon}$ for the stochastic process generated by Algorithm \ref{algo} satisfies
\begin{equation}\label{complexity_final}
\mathbb{E}[N_{\epsilon}]\le \frac{3p}{(3p-2)(2p-1)}\left[(f_0-f_{low})\left(2\kappa_s\epsilon^{-3/2}+\kappa_u\right)+log_{\gamma}\left(\frac{\overline{\sigma}}{\sigma_0}\right)+2 \right].
\end{equation}
Moreover, in case the stopping criterion (\ref{tc.s}) is used to perform  Step 3, 
\eqref{complexity_final} still holds provided that 
 there exists  $L_g>0$ such that  \eqref{Lipgrad} is satisfied 
for all $x$, $y\in\mathbb{R}^n$.
 }

\end{theorem}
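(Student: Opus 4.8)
The plan is to combine the two partial bounds \eqref{bound_Ns} and \eqref{bound_Nsc} established above by means of the trivial decomposition of the hitting time. By the definitions in \eqref{defnsigma}, for every realisation of Algorithm~\ref{algo} each of the $N_{\epsilon}$ iterations preceding termination either has $\Sigma_k\ge\overline{\sigma}$ or $\Sigma_k<\overline{\sigma}$, so that $N_{\epsilon}=N_{\overline{\sigma}}+N_{\overline{\sigma}}^{^C}$. Taking expectations and inserting the bound \eqref{bound_Ns} on $\mathbb{E}[N_{\overline{\sigma}}]$ and the bound \eqref{bound_Nsc} on $\mathbb{E}\big[N_{\overline{\sigma}}^{^C}\big]$---both available under the hypotheses of the theorem---gives
\[
\mathbb{E}[N_{\epsilon}]\le \frac{2}{3p}\,\mathbb{E}[N_{\epsilon}]+\frac{1}{2p-1}\left[(f_0-f_{low})\left(2\kappa_s\epsilon^{-3/2}+\kappa_u\right)+\log_{\gamma}\!\left(\frac{\overline{\sigma}}{\sigma_0}\right)+2\right].
\]
Since Assumption~\ref{AssAlg} is invoked with $p>2/3$, both $2p-1$ and $3p-2$ are strictly positive, whence $1-\frac{2}{3p}=\frac{3p-2}{3p}>0$; dividing the displayed inequality through by this quantity then yields exactly \eqref{complexity_final}.

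The one point that genuinely needs care is that the rearrangement above is legitimate only when $\mathbb{E}[N_{\epsilon}]<\infty$, which I would secure by the standard truncation device. For a positive integer $T$, one redoes the derivations of \eqref{bound_Ns} and \eqref{bound_Nsc} with $N_{\epsilon}$ replaced everywhere by $\min\{N_{\epsilon},T\}$: every ingredient used there---Lemma~\ref{CSlemma21}, Lemma~\ref{Lemmasigmabar}, Lemma~\ref{LemmaN1N2}, and the counting quantities of Definition~\ref{def2}---is either a per-realisation statement or a conditional-expectation inequality that remains valid verbatim for the stopped process, and $\mathbb{E}[\min\{N_{\epsilon},T\}]\le T<\infty$, so the same algebra gives \eqref{complexity_final} with $\min\{N_{\epsilon},T\}$ in place of $N_{\epsilon}$, uniformly in $T$; letting $T\to\infty$ and invoking monotone convergence removes the truncation.

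Finally, for the variant in which Step~3 uses the stopping criterion \eqref{tc.s} rather than \eqref{tcsub}, the choice of criterion enters the development only through Lemma~\ref{Lemmapass}, hence through the value of $\nu$---and thus of $\kappa_s$---in Lemma~\ref{LemmaN1N2}; under the additional Lipschitz hypothesis \eqref{Lipgrad} these remain valid with $\nu$ taken to be the second expression displayed in the proof of Lemma~\ref{LemmaN1N2}, so \eqref{bound_Nsc}, and hence \eqref{complexity_final}, persist unchanged apart from the value of $\kappa_s$. I expect no real obstacle at this stage: the substantive work---handling the Step~4 failures, the implicit gradient accuracy \eqref{AccG}, and the two regimes $\Sigma_k\ge\overline{\sigma}$ and $\Sigma_k<\overline{\sigma}$---was already done in proving \eqref{bound_Ns} and \eqref{bound_Nsc}, so the only thing to be vigilant about is the finiteness of $\mathbb{E}[N_{\epsilon}]$, which is why the truncation step is the part I would write out most carefully.
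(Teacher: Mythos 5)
Your proof is correct and follows essentially the same route as the paper: decompose $\mathbb{E}[N_{\epsilon}]=\mathbb{E}[N_{\overline{\sigma}}]+\mathbb{E}\big[N_{\overline{\sigma}}^{^C}\big]$, insert \eqref{bound_Ns} and \eqref{bound_Nsc}, and rearrange using $p>2/3$. Your truncation argument to justify the rearrangement when $\mathbb{E}[N_{\epsilon}]$ is not a priori finite is a legitimate point of care that the paper's proof passes over silently, but it does not change the substance of the argument.
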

\begin{proof}
By definition (see \eqref{defnsigma}),  $\mathbb{E}[N_{\epsilon}]=  \mathbb{E}\big[N_{\overline{\sigma}}\big]+\mathbb{E}\big[N_{\overline{\sigma}}^{^C}\big]$. Thus, considering \eqref{bound_Ns},
\[
\mathbb{E}[N_{\epsilon}] \le \frac{2}{3p}\mathbb{E}[N_{\epsilon}]+\mathbb{E}\big[N_{\overline{\sigma}}^{^C}\big],
\]
and, hence, by \eqref{bound_Nsc},
\[
\mathbb{E}[N_{\epsilon}] \le \frac{3p}{3p-2}\mathbb{E}\big[N_{\overline{\sigma}}^{^C}\big]= \frac{3p}{(3p-2)(2p-1)}\left[(f_0-f_{low})\left(2\kappa_s\epsilon^{-3/2}+\kappa_u\right)+log_{\gamma}\left(\frac{\overline{\sigma}}{\sigma_0}\right)+2 \right],
\]
which concludes the proof.
\end{proof}

\section{Subsampling scheme for finite-sum minimisation} We now consider the solution of large-scale instances of the finite-sum minimisation problems arising in machine learning and data analysis, modelled by \eqref{finite-sum}. In this context, the approximations  $\overline{\nabla f}(x_k)$ and  $\overline{\nabla^2 f}(x_k)$ to the gradient and the Hessian  used  at Step $1$ and Step $2$ of Algorithm \ref{algo},  respectively, are obtained   by subsampling, using subsets of indexes $\calD_{j,k}$, $j\in\{1,2\}$, randomly and uniformly chosen from $\{1,...,N\}$. I.e.,  for $j\in\{1,2\}$,
 \begin{equation}
 \label{approxBernstein}
  \overline{\nabla^j f}(x_k)
  = \frac{1}{|\calD_{j,k}|} \sum_{i \in \calD_{j,k}} \overline{\nabla^j \varphi_i}(x_k),
  \end{equation}
are used   in place of $ \nabla^j f(x_k)
  = \frac{1}{N} \sum_{i=0}^N \nabla^j \varphi_i(x_k)$.
Specifically, if we want $\overline{\nabla^{j} f}(x_k)$ to be within an accuracy $\tau_{j,k}$ with probability at least $p_j$, $j\in\{1,2\}$, i.e.,
\[
Pr\left(\|\overline{\nabla^{j} f}(x_k)-\nabla^{j} f(x_k)\|\le \tau_{j,k} \right) \ge p_j,
\]
the sample size $|\calD_{j,k}|$ can be determined by using the operator-Berstein inequality introduced in \cite{Tropp}, so that $\overline{\nabla^j f}(x_k)$ takes the form (see \cite{BellGuriMoriToin19}) given by \eqref{approxBernstein}, with \begin{equation}
\label{sizeD}
  |\calD_{j,k}|
  \geq \min\left \{ N,\left\lceil\frac{4\kappa_{\varphi,j}(x_k)}{\tau_{j,k}}
  \left(\frac{2\kappa_{\varphi,j}(x_k)}{\tau_{j,k}}+\frac{1}{3}\right)
  \,\log\left(\frac{d_j}{1-p_j}\right)\right\rceil\right \},
  \end{equation}
  where
  \[
  d_j=\left\{\begin{array}{ll}
         
         n+1, & \tim{if} j=1,\\
         2n,  & \tim{if} j=2,
       \end{array}\right.
  \]
  and under the assumption that, for any $x\in\mathbb{R}^n$, there exist non-negative upper bounds $\{\kappa_{\varphi,j}\}_{j=1}^2$ such that
  \[
   \max_{i \in\ii{N}}\|\nabla^j\varphi_i(x)\| \leq \kappa_{\varphi,j}(x),\qquad j\in\{1,2\}.
  \]
Let us assume that there exist $\kappa_g>0$ and $\kappa_B>0$ such that $\kappa_{\varphi,1}(x)\le \kappa_g$ and  $\kappa_{\varphi,2}(x)\le \kappa_B$ for any $x\in \mathbb{R}^n$.
  Since the subsampling procedures used at iteration $k$ to get $\calD_{1,k}$ and $\calD_{2,k}$ are independent, it follows that when $\{\tau_{j,k}\}_{j=1}^2$ are chosen as the right-hand sides in \eqref{AccG} and \eqref{AccH}, respectively, the builded model \eqref{m} is $p$-probabilistically $\delta$-sufficiently accurate with $p=p_1p_2$. Therefore, a practical version of Algorithm \ref{algo} is for instance given by adding a suitable termination criterion and modifying the first three steps of Algorithm \ref{algo} as reported in Algorithm 4.1 below. %[Gm: era: ``is reported on page \pageref{algodet}.''] }}
  
  \algo{algodet}{Modified Steps \boldmath$0-2$ of Algorithm \ref{algo}}
{\vspace*{-0.3 cm}
%\item[Step 1: Compute the optimality measure and check for termination. ]
%  Compute $\barphi_{f,q}^{\delta_{k-1}}(x_k,\omega_k)$. If \req{bar-term-q} holds
%  with $\delta = \delta_{{k-1}}$, terminate with the approximate
%  solution $x_\epsilon=x_k$.
\begin{description}
\item[Step 0: Initialisation.]
  An initial point $x_0\in\mathbb{R}^n$  and an initial regularisation parameter $\sigma_0>0$
  are given, as well as an accuracy level  $\epsilon \in (0,1)$.  The constants $\beta$, $\alpha$,  $\eta$, $\gamma$, $\sigma_{\textrm{min}}$, $\kappa$, $\tau_0$, $\kappa_{\tau}$ and $c$ are also given such that
\begin{eqnarray}
 &0<\beta, \kappa_\tau<1,  \quad 0\le \alpha< \frac 2 3, \quad\sigma_{\min}\in (0, \sigma_0], \nonumber \\
 &0<\eta < \frac{2-3\alpha}{2},\quad \gamma>1,\quad \kappa\in[0,1), \quad \tau_0>0,\quad c>0.\nonumber
\end{eqnarray}
  Compute $f(x_0)$ and set $k=0$, ${\rm flag}=1$.
\vspace{2mm}
\item[Step 1: Gradient approximation. ] Set $i=0$ and initialise $\tau_{1,k}^{(i)}=\tau_0$. Do
\begin{itemize}
\item[1.1] compute $\overline{\nabla f}(x_k)$ such that \eqref{approxBernstein}--\eqref{sizeD} are satisfied with $j=1$, $\tau_{1,k}=\tau_{1,k}^{(i)}$;
\item[1.2] if $\tau_{1,k}^{(i)}$ $\le \kappa (1-\beta)^2 \left(\frac{\|\overline{\nabla f}(x_k)\|}{\sigma_k}\right)^2$, go to Step $2$;
\item[] else, set $\tau_{1,k}^{(i+1)}=\kappa_{\tau}\tau_{1,k}^{(i)}$, increment $i$ by one and go to Step $1.1$;
\end{itemize}

\vspace{2mm}

\item[Step 2: Hessian approximation (model costruction). ] If ${\rm flag}=1$ set  $c_{k}=c$, else set $c_{k}=\alpha(1-\beta)\|\overline{\nabla f}(x_{k})\|$.\\
Compute $\overline{\nabla^2f}(x_k)$ using \eqref{approxBernstein}--\eqref{sizeD} with $j=2$, $\tau_{2,k}=c_k$ and form the model $m_k(s)$ defined in \eqref{m}.\vspace{2mm}

\end{description}
}
\noindent
Concerning the gradient estimate, the scheme computes (Step $1$) an approximation $\overline{\nabla f}(x_k)$ satisfying the accuracy criterion 

\begin{equation}
\label{err_relG}
 \|\overline{\nabla f}(x_k)-\nabla f(x_k)\|\le \kappa(1-\beta)^2\left(\frac{\|\overline{\nabla f}(x_k)\|}{\sigma_k}\right)^2,
 \end{equation}
 
which is independent of the step computation and based on the knowable quantities $\kappa$, $\beta$ and $\sigma_k$. This is done by reducing the accuracy $\tau_{1,k}^{(i)}$ and repeating the inner loop at Step $1$, until the fulfillment of the inequality at Step $1.2$. We underline that condition \eqref{err_relG} is guaranteed by the algorithm, since \eqref{sizeD} is a continuous and increasing function with respect to $\tau_{j,k}$, for fixed $j=1$, $k$, $p_j$ and $N$; hence,  there exists a sufficiently small $\overline{\tau}_{1,k}$ such that the right-hand side term in \eqref{sizeD} will reach, in the worst-case, the full sample size $N$, yielding $\overline{\nabla f}(x_k)=\nabla f(x_k)$. 
Moreover, if the stopping criterion $\|\overline{\nabla f}(x_k)\|\le \epsilon$ is used,   the loop is ensured to terminate also whenever the predicted accuracy requirement $\tau_{1,k}^{(i)}$ becomes smaller than 
$\kappa (\frac{1-\beta}{\sigma_k})^2\epsilon^2$. 
 On the other hand, in practice, we expect to use a small number of samples in the early stage of the iterative process, when the norm of the approximated gradient is not yet small. To summarise, if without loss of generality we assume that $\overline{\tau}_{1,k}\ge \hat{\tau}$ at each iteration $k$, we conclude that, in the worst case, Step $1$ will lead to at most $\lfloor \log(\hat{\tau})/\log(\kappa_{\tau}\tau_0)\rfloor+1$ computations of $\overline{\nabla f}(x_k)$. The Hessian approximation $\overline{\nabla^2f}(x_k)$ is, instead, defined at Step $2$ and its computation relies on the reliable value of $c_k$.  We remark that at iteration $k$ we have that:
\begin{itemize}
\vspace{0.1cm}
\item $\overline{\nabla^2 f}(x_k)$ is computed only once, irrespectively of the approximate gradient computation considered at Step $1$;\vspace{0.1cm}
\item a finite loop is considered at Step $1$ to obtain a gradient approximation satisfying \eqref{err_relG}, where the right-hand side is independent of the step length $\|s_k\|$, thou implying \eqref{uppboundkgrad}--\eqref{keygrad}. Hence, the gradient approximation is fully determined at the end of Step $1$ and further recomputations due to the step calculation (see Algorithm \ref{algo}, Step $3$) are not  required. 
\end{itemize}
We conclude this section by noticing that each iteration $k$ of Algorithm \ref{algo} with the modified steps introduced in Algorithm \ref{algodet} can indeed be seen as an iteration of Algorithm \ref{algo} where the sequence of random models $\{M_k\}$ is $p$-probabilistically sufficiently accurate in the sense of Definition \ref{AccIk}, with $p=p_1p_2$, and an iteration of \cite[Algorithm~3.1]{IMA}, when $\kappa=0$ is considered in \eqref{AccG} (exact gradient evaluations).

\section{Numerical tests}
In this section we  analyse the behaviour of the Stochastic ARC Algorithm  (Algorithm  \eqref{algo}). Inexact gradient and Hessian evaluations are performed as  sketched in 
modified Steps 0-2 of  Algorithm  \ref{algodet}. The performance of the proposed algorithm is compared with that of  the corresponding
version in  \cite{IMA}  employing exact gradient, with the aim to provide  numerical evidence that adding a further source of inexactness in gradient computation is beneficial in terms of computational cost saving. 
We consider nonconvex finite-sum minimisation problems. This is, in fact, a highly frequent scenario when dealing with
binary classification tasks arising in machine learning applications.  More in depth, given a training set of $N$ features  $a_i\in\mathbb{R}^n$  and corresponding labels $y_i$,  $i=1,\ldots,N$, 
%whose $i$-th row, $i\in\{1,...,N\}$, is defined as
%\[
%\left(a_i^\top,y_i\right)\in\mathbb{R}^{1\times (n+1)},
%\]
%where $a_i\in\mathbb{R}^n$ and $y_i\in\{0,1\}$ denote the $i$-th feature vector and label, respectively, 
we solve the following minimisation problem:
\begin{equation}
\label{minloss}
\min_{x\in\mathbb{R}^n} f(x)= \min_{x\in\mathbb{R}^n}  \frac{1}{N}\sum_{i=1}^N{\varphi_i(x)}= \min_{x\in\mathbb{R}^n}\frac 1 N \sum_{i=1}^N \left( y_i-\sigma\left(a_i^\top x\right) \right)^2,
\end{equation}
where 
\begin{equation}
\label{sigm}
\sigma(a^\top w)=\frac{1}{1+e^{-a^\top w}},\qquad a,w\in\mathbb{R}^n.
\end{equation}
That is we use  the sigmoid function \eqref{sigm}
as the model for predicting the values of the labels and the least-squares loss as a measure of the error on such predictions, that has to be minimised by approximately solving \eqref{minloss} in order to come out with the parameter vector $x$, to be used for label predictions on new untested data. Moreover, a  number $N_T$ of testing data $\{\overline a_i,\overline y_i\}_{i=1}^{N_T}$ is used to validate the computed model. The values $\sigma(\overline{a}_i^\top x)$ are used to predict the testing labels $\overline y_i$, $i\in\{1,...,N_T\}$, and the corresponding error, measured by
$ 
\frac{1}{N_T} \sum_{i=1}^{N_T} \left(\overline y_i-\sigma\left(\overline a_i^T x\right) \right)^2,
$ 
is computed.
Implementation issues concerning the considered procedures are the object of Subsection \ref{Impl_iss}, while statistics of our runs are discussed in Subsection \ref{Num_res}.

%and the exact gradient $\frac1N\sum_{i=1}^N \nabla \varphi_i(x)$ and Hessian $\frac1N\sum_{i=1}^N \nabla^2 \varphi_i(x)$ are approximated by means of uniform subsampling equipped with the rule \eqref{sizeD} to select the proper sample sizes. 

\subsection{Implementation issues}
\label{Impl_iss}

The implementation of the main phases of Algorithm \eqref{algo}, equipped with the modified steps in Algorithm \eqref{algodet}, respects the following specifications.\\
According to \cite[Algorithm 3.1]{IMA}, the cubic regularisation parameter is initially $\sigma_0=10^{-1}$, its minimum value is $\sigma_{\min}=10^{-5}$ and the initial guess vector $x_0=(0,...,0)^\top \in\mathbb{R}^n$ is considered for all runs. 
Moreover, the probability of success $p_j$ in \eqref{sizeD} is set equal to $0.8$, for $j\in\{1,2\}$, while the parameters $\alpha$, $\beta$, $\epsilon$, $\eta$ and $\gamma$ are fixed as $\alpha=0.1$, $\beta=0.5$, $\epsilon=5\cdot 10^{-3}$, $\eta=0.8$ and $\gamma=2$. The latter two correspond to the values of $\eta_2$ and $\gamma_3$ considered in \cite[Algorithm 3.1]{IMA}, respectively. The minimisation of the cubic model at Step $3$ of Algorithm \ref{algo} is performed by the Barzilai-Borwein gradient method \cite{bb} combined with a nonmonotone linesearch following the proposal in \cite{blms}. The major per iteration cost of such Barzilai-Borwein process is one Hessian-vector product, needed to compute the gradient of the cubic model. The threshold used in the termination criterion (\ref{tc}) is $\beta_k=0.5$, $k\ge 0$. As for \cite[Algorithm 3.1]{IMA}, we impose a maximum of $500$ iterations and a successful termination is declared when the following condition is met:
\[
\|\overline{\nabla f}(x_k)\|\le \epsilon, \quad k\ge 0.
\]

\noindent
In case $\|\overline{\nabla f}(x_k)\|\le \epsilon$ and the model is accurate, then  by  \eqref{AccG}
$$
\|{\nabla f}(x_k)\|\le \|\overline{\nabla f}(x_k)\|+\|{\nabla f}(x_k)-\overline{\nabla f}(x_k)\|\le \overline \epsilon := \epsilon+\kappa[(1-\beta)/\sigma_{min}]^2  \epsilon^2
$$
and, hence, $x_k$ is an $\overline \epsilon$-approximate first-order optimality point. Since the model is accurate with probability at least $p$,   $x_k$ is  an $\overline \epsilon$-approximate first-order optimality point with probability at least $p$.
We further  note that the exact gradient and the Hessian of the component functions $\varphi_i(x)$, $i\in\{1,...,N\}$, are given by: 
\begin{eqnarray}
&&\nabla \varphi_i(x)=-2 e^{-a_i^\top x}\left(1+e^{-a_i^\top x}\right)^{-2}\left(y_i-\left(1+e^{-a_i^\top x}\right)^{-1}\right)a_i,\label{der1phi}\\
& & \nabla^2 \varphi_i(x)=-2 e^{-a_i^\top x}\left(1+e^{-a_i^\top x}\right)^{-4}\left(y_i\left(\left(e^{-a_i^\top x}\right)^2-1\right)+1-2e^{-a_i^\top x}\right)a_ia_i^\top.\label{der2phi}
\end{eqnarray}
\noindent
Then, the gradient and the Hessian approximations $\overline{\nabla^j f}(x_k)$, $j\in\{1,2\}$, computed at Step $1$ and Step $2$ of Algorithm \ref{algodet} according to \eqref{approxBernstein}--\eqref{sizeD}, involve the constants
\begin{eqnarray}
\nonumber \kappa_{\varphi,1}(x_k)&=&\max_{i\in\{1,...,N\}}\left\{2 e^{-a_i^\top x_k}\left(1+e^{-a_i^\top x_k}\right)^{-2}\left|y_i-\left(1+e^{-a_i^\top x_k}\right)^{-1}\right| \|a_i\|\right\},\nonumber\\
\kappa_{\varphi,2}(x_k)&=&\max_{i\in\{1,...,N\}}\left\{2e^{-a_i^\top x_{k}}\left(1+e^{-a_i^\top x_{k}}\right)^{-4}\left|y_i\left(\left(e^{-a_i^\top x_{k}}\right)^2-1\right)+1-2e^{-a_i^\top x_{k}}\right| \|a_i\|^2\right\},\nonumber
\end{eqnarray}
whose computations can indeed be an issue in theirselves. Nevertheless, thank to the exactness and the specific form (see \eqref{minloss}) of the function evaluation $f(x_k)$, the values $a_i^\top x_k$, $1\le i\le N$, are available at iteration $k$ and, hence, $\kappa_{\varphi,j}(x_k)$, $j\in\{1,2\}$, can be determined at the (offline) extra cost of computing $\|a_i\|^j$, $j\in\{1,2\}$, for $1\le i\le N$. As in \cite[Subsection 8.2]{IMA}, the value of $c$ used in \eqref{ck}, in order to reduce the iteration computational cost whenever $\|s_k\|\ge 1$,  is such that  $|\mathcal{D}_{2,0}|$ computed via \eqref{sizeD} for $j=2$, with $\tau_{2,0}=c$ (first approximation of the Hessian), satisfies $|\mathcal{D}_{2,0}|/N=0.1$. We indeed start using the $10\%$ of the examples to approximate the Hessian. Concerning the gradient approximation performed at Step $1$ of Algorithm \ref{algodet},  the value of $\tau_0$ is chosen in order to use  a prescribed percentage of the number of training samples $N$ to obtain  $\overline{\nabla f}(x_0)$. In all runs, such a percentage has been set to $0.4$. Then, we proceeded as follows. We computed $\overline{\nabla f}(x_0)$, via \eqref{approxBernstein}, with $j=1$ and $|\mathcal{D}_{1,0}|/N=0.4$. Then, we compute $\tau_0$ so that  \eqref{sizeD}, with $\tau_{1,0}=\tau_0$ is satisfied as an equality. Finally, 
 the value of $\kappa$ at Step $1.2$ of Algorithm \ref{algodet} has been correspondingly set to $4\tau_{1,0}^{(0)}\left(\sigma_0/\|\overline{\nabla f}(x_0)\|\right)^2$, with $\tau_{1,0}^{(0)}=\tau_0$. This way, the acceptance criterion of Step $1.2$ is satisfied without further inner iterations (i.e., for $i=0$), when $k=0$, and $\tau_0$ is indeed considered as the starting accuracy level for gradient approximation at each execution of Step $1$ of Algorithm \ref{algodet}. We will hereafter refer to such implementation of Algorithm \eqref{algo} coupled with Algorithm \ref{algodet} as $SARC$. The numerical tests of this section compare $SARC$ with the corresponding variant in \cite[Algorithm $3.1$]{IMA}, namely \textit{ARC-Dynamic}, employing exact gradient evaluations, with $\gamma_1=1/\gamma$, $\gamma_2=\gamma_3=\gamma$ and $\eta_1=\eta_2=\eta$. It is worth noticing that the problem \eqref{minloss} arises in the training of an artificial neural network with no hidden layers and zero bias. Nevertheless, to cover the general situation where $SARC$ algorithm is applied to more complex neural networks, we have followed the approach in \cite{Roosta_2p} for what concerns the cost measure. Going into more details, at the generic iteration $k$, we count  the $N$  forward propagations needed to evaluate the objective in \eqref{minloss} at $x_k$ has a unit Cost Measure (CM), while the evaluation of the  approximated gradient at the same point requires $|\mathcal{D}_{1,k}|$ additional backward propagations at the weighed cost $|\mathcal{D}_{1,k}|/N$ CM.  Moreover, each vector-product  $\overline{\nabla^2 f}(x_k)v$ ($v\in\mathbb{R}^n$), needed at each iteration of the Barzilai-Borwein method used 
to minimise the cubic model at Step $3$ of Algorithm \ref{algo},   is performed via finite-differences, leading to  additional  $|\mathcal{D}_{2,k}|$ forward and backward propagations to compute $\overline{\nabla f} (x_k+hv)$, ($h\in\mathbb{R}^+$), at the price of the weighted cost $2|\mathcal{D}_{2,k}|/N$ CM and a potential extra-cost  $|\mathcal{D}_{2,k}\smallsetminus(\mathcal{D}_{1,k}\cap \mathcal{D}_{2,k})|/N$  CM to approximate ${\nabla f}(x_k)$ via uniform subsampling using the samples in $\mathcal{D}_{2,k}$.  This latter approximation is computed once at the beginning of the Barzilai-Borwein procedure. Therefore, denoting by $r$ the number of Barzilai-Borwein iterations at iteration $k$, the increase of the CM at the $k$-th iteration of \textit{ARC-Dynamic} and \textit{SARC}  related to the derivatives computation is reported in Table \ref{cost}.

\begin{small}
\begin{table}[h]
\begin{center}
\begin{tabular}{cccccc}
\toprule
\textit{ARC-Dynamic} & \textit{SARC} \\  % \hline
\midrule
$1+2|\mathcal{D}_{2,k}|r/N$ &   $\left(|\mathcal{D}_{1,k}|+2|\mathcal{D}_{2,k}|r+|\mathcal{D}_{2,k}\smallsetminus(\mathcal{D}_{1,k}\cap \mathcal{D}_{2,k})|\right)/N$ \\
\bottomrule
\end{tabular}
\caption{Increase of the CM at the $k$-th iteration of \textit{ARC-Dynamic} and \textit{SARC}  related to the derivatives computation; $r$ denotes the number of performed Barzilai-Borwein iterations.}\label{cost} 

\end{center}
\end{table}
\end{small}

\noindent
We will refer to the Cost Measure at Termination (CMT) as the main parameter to evaluate the efficiency of the method within the numerical tests of the next section. The algorithms have been implemented in Fortran language and run on an Intel Core i5, $1.8$ GHz $\times~1$ CPU, $ 8$ GB RAM.

\subsection{Numerical results}
\label{Num_res}
In this section we finally report  statistics of the numerical tests performed by \textit{SARC} and \textit{ARC-Dynamic} on the set of synthetic datasets from \cite{IMA,bollapragada}, whose   main characteristics are recalled in Table \ref{TableSynth}. They provide moderately ill-conditioned problems (see, e.g., Table \ref{TableSynth}) and motivate the use of second order methods. 

\begin{small}
\begin{table}[h] 
\begin{center}
\begin{tabular}{ccccc}
\toprule
Dataset & Training~$N$ & $n$  & Testing $N_T$ & $cond$   \\  % \hline
\midrule
Synthetic1 &   9000 & 100 & 1000 &  $2.5\cdot10^4$  \\
Synthetic2 &   9000 & 100 & 1000 &  $1.4\cdot10^5$  \\
Synthetic3 &   9000 & 100 & 1000 &  $4.2\cdot10^7$  \\
Synthetic4 &   90000 & 100 & 10000 &  $4.1\cdot10^4$ \\
{Synthetic6} &   {90000} & {100} & {10000} &  {$5.0\cdot10^6$} \\
\bottomrule
\end{tabular}
\caption{Number  of training samples ($N$),  feature dimension  ($n$),  number of testing samples ($N_T$),
$2$-norm condition number  of the Hessian matrix at computed solution ($cond$).}\label{TableSynth}
\end{center}
\end{table}
\end{small}
\noindent
For fair comparisons, the values of $c$ used for each dataset in Table \ref{TableSynth} to build the Hessian approximation according to Step $2$ of Algorithm \ref{algodet} are chosen as in \cite[Table $8.1$]{IMA}.\\
\noindent
In Table  \ref{TableSynthALL} we report,  for both  $SARC$ and \textit{ARC-Dynamic} algorithms, the total number of iterations ({\rm n-iter}), the value of Cost Measure at Termination ({\rm CMT}) and the mean percentage of saving ({\rm Save-M}) obtained by  \textit{SARC}  with respect to \textit{ARC-Dynamic} on the synthetic datasets listed in Table \ref{TableSynth}. Since the selection of the subsets $\mathcal{D}_{j,k}$, $j\in\{1,2\}$, in \eqref{sizeD} is uniformly and randomly made at each iteration of the method, statistics in the forthcoming tables are averaged over $20$ runs.

\begin{small}
\begin{table}[h]   
\begin{center}
\begin{tabular}{lcc|ccc}
\toprule
Dataset & \multicolumn{2}{c}{\textit{ARC-Dynamic}} & \multicolumn{3}{c}{\textit{SARC}} \\  
&  n-iter  & CMT & n-ter & CMT   & Save-M\\
\midrule
Synthetic1 &   11.1  & 130.84 & 10.0 & ~95.27 & ~27\% \\ 
Synthetic2  &  10.6  & 109.56 & 10.2 & ~93.08  & ~15\% \\ 
Synthetic3 &   11.2  & 109.64 & 10.0 & ~97.52 & ~11\% \\ 
Synthetic4 &   11.0  & 124.07 & 10.4 & 100.48 &  ~19\% \\
{Synthetic6} &   {10.0} &   {84.18} &  {10.1} & {106.31} & $-26\%$\\
\bottomrule
\end{tabular}
\caption{Synthetic datasets. The columns are divided in two different groups. \textit{ARC-Dynamic}: average number of iterations ({\rm n-iter}) and CMT. \textit{SARC}: average number of iterations ({\rm n-iter}), CMT and mean percentage of saving ({\rm Save-M}) obtained by \textit{SARC} over \textit{ARC-Dynamic}. Mean values over $20$ runs. }   \label{TableSynthALL}
\end{center}
\end{table}
\end{small} 

\begin{small}
\begin{table}[h]   
\begin{center}
\begin{tabular}{lccccc}
\toprule
 %\multicolumn{9} & Test problems EGE \\ \hline
Method & Synthetic1  & Synthetic2 & Synthetic3 & Synthetic4 & {Synthetic6}\\
\midrule
\textit{ARC-Dynamic}            & 94.34\%  & 92.68\% & 94.64\%  & 95.52\% & {$93.82\%$} \\
\textit{SARC}   & 93.18\%  & 92.44\% & 93.62\%  & 94.61\% &  {$93.70\%$} \\
\bottomrule
\end{tabular}
\caption{Synthetic datasets. Binary classification rate at termination on the testing set employed by \textit{ARC-Dynamic} and \textit{SARC}, mean values over $20$ runs.}   \label{BinAsyntheticdataset}
\end{center}
\end{table}
\end{small}

\noindent
Table \ref{TableSynthALL} shows that  the novel adaptive strategy employed by $SARC$ results more efficient than \textit{ARC-Dynamic}, reaching an $\epsilon$-approximate first-order stationary point at a lower CMT, 
in all cases  except from Synthetic6.  This is obtained without affecting the classification accuracy  on the testing sets as it is shown in Table \ref{BinAsyntheticdataset}, where the average binary accuracy on the testing sets achieved by methods under comparison is reported. \noindent

\noindent
To give more evidence of the gain in terms of CMT provided by  $SARC$ on Synthetic1-Synthetic4 along the iterative process,  we display in Figure \ref{Perf1KL} the decrease of the training and the testing loss versus the adopted cost measure CM, while Figure \ref{GDsyntehticdata} is reserved to the plot of the gradient norm versus CM. For such figures, a representative plot is considered among each series of $20$ runs obtained by $SARC$ and \textit{ARC-Dynamic} on each of the considered dataset.

\begin{figure}[h]
\centering
\includegraphics[width=%
0.49\textwidth]{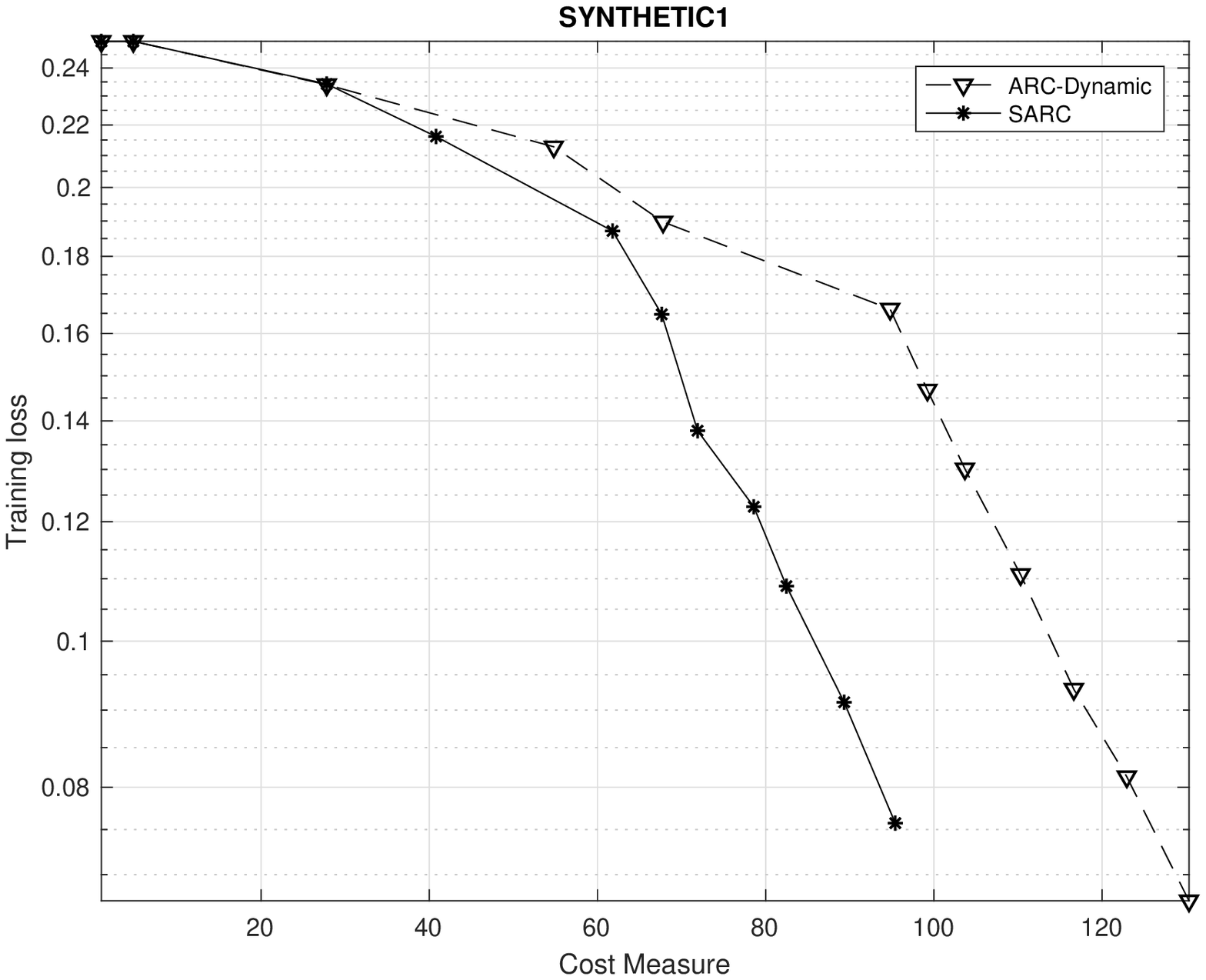}
\includegraphics[width=%
0.49\textwidth]{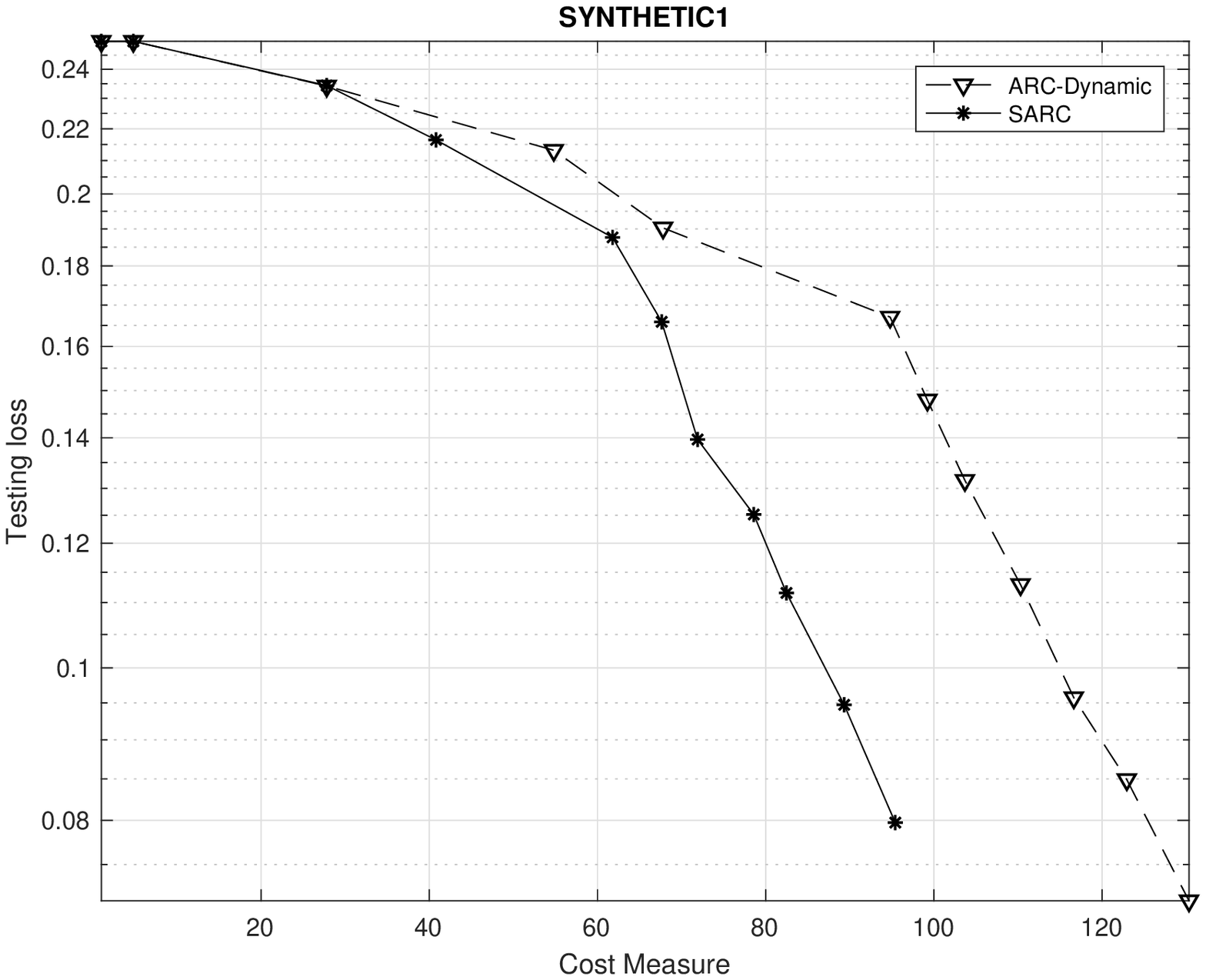}\
\includegraphics[width=%
0.49\textwidth]{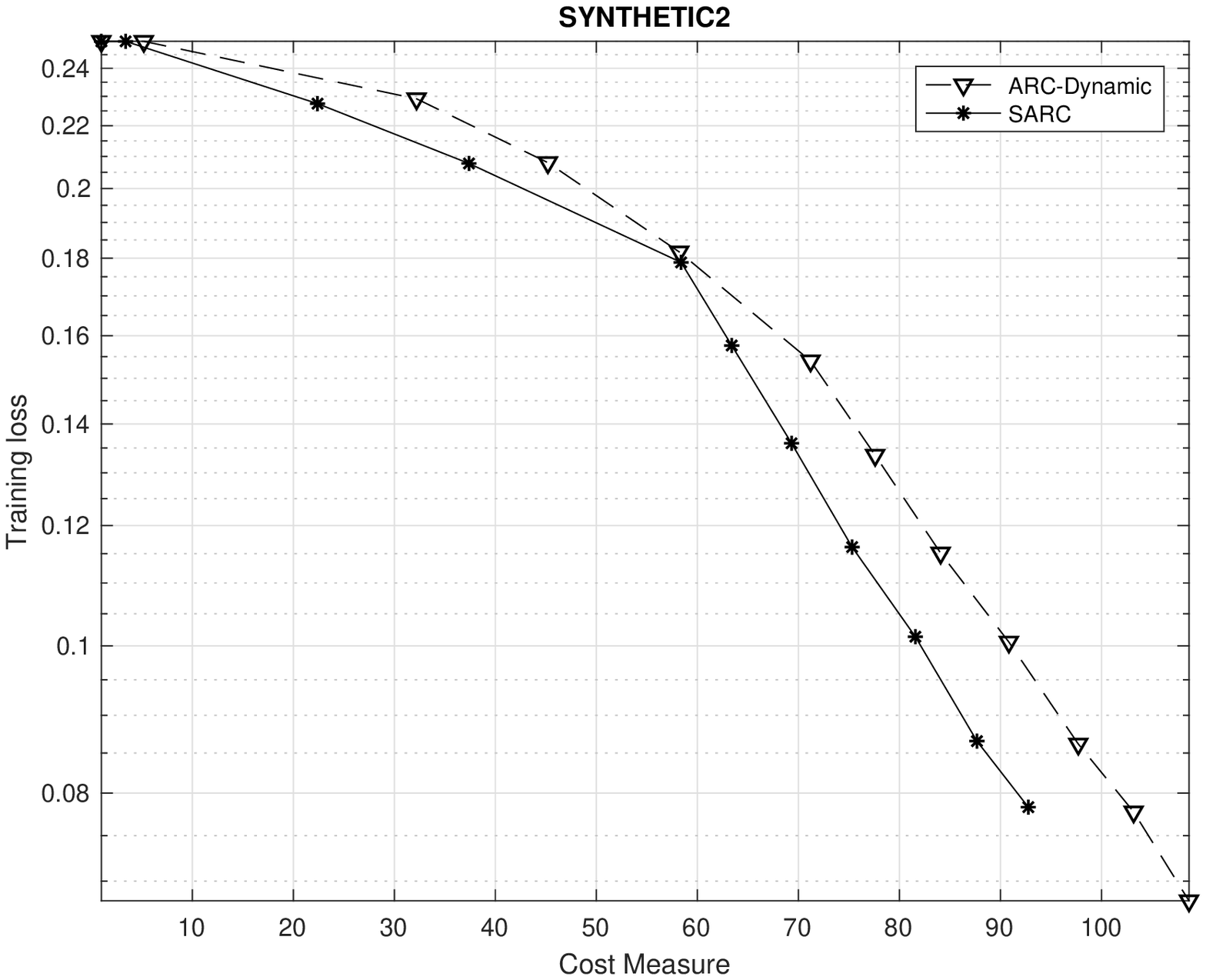}
\includegraphics[width=%
0.49\textwidth]{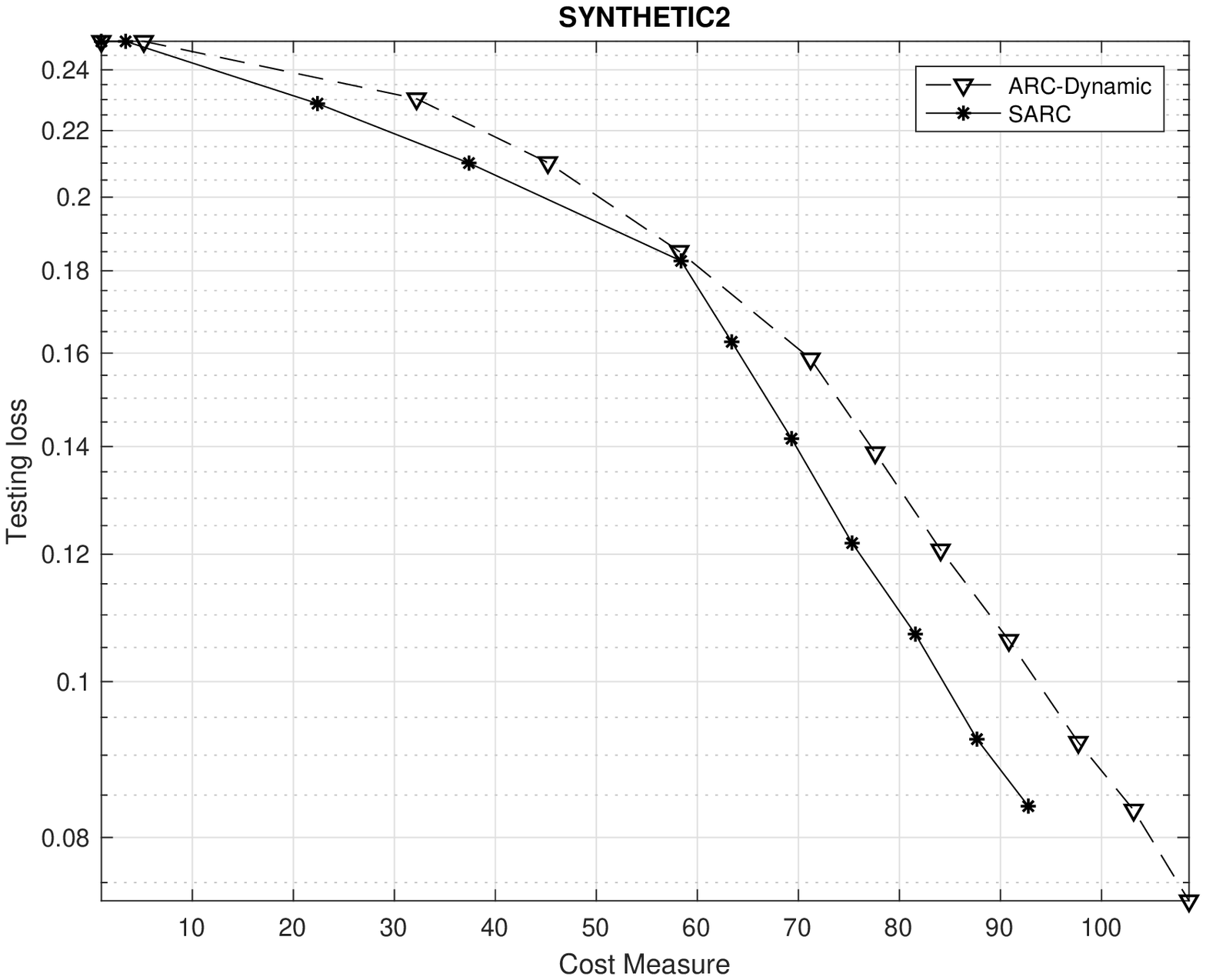}
\includegraphics[width=%
0.49\textwidth]{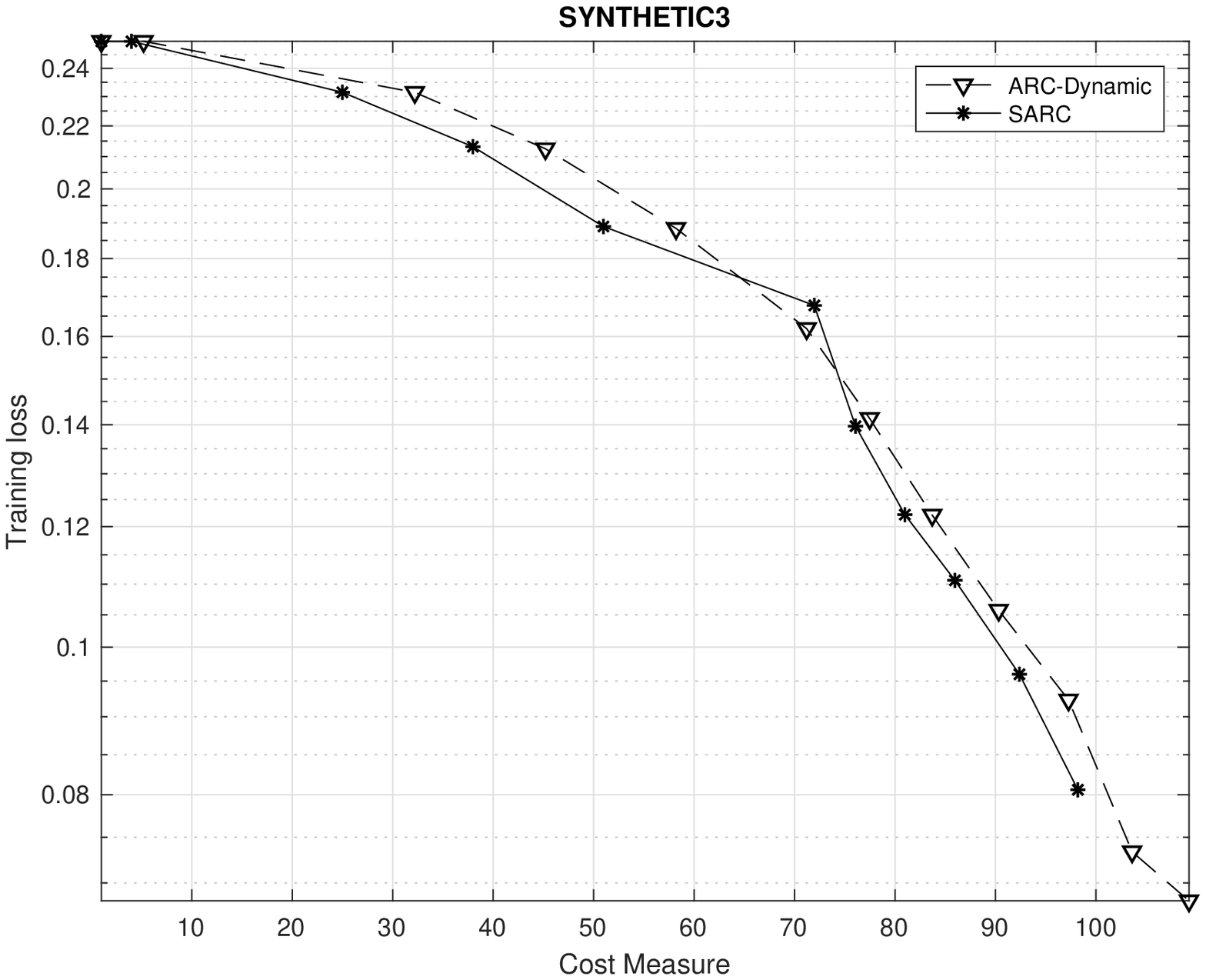}
\includegraphics[width=%
0.49\textwidth]{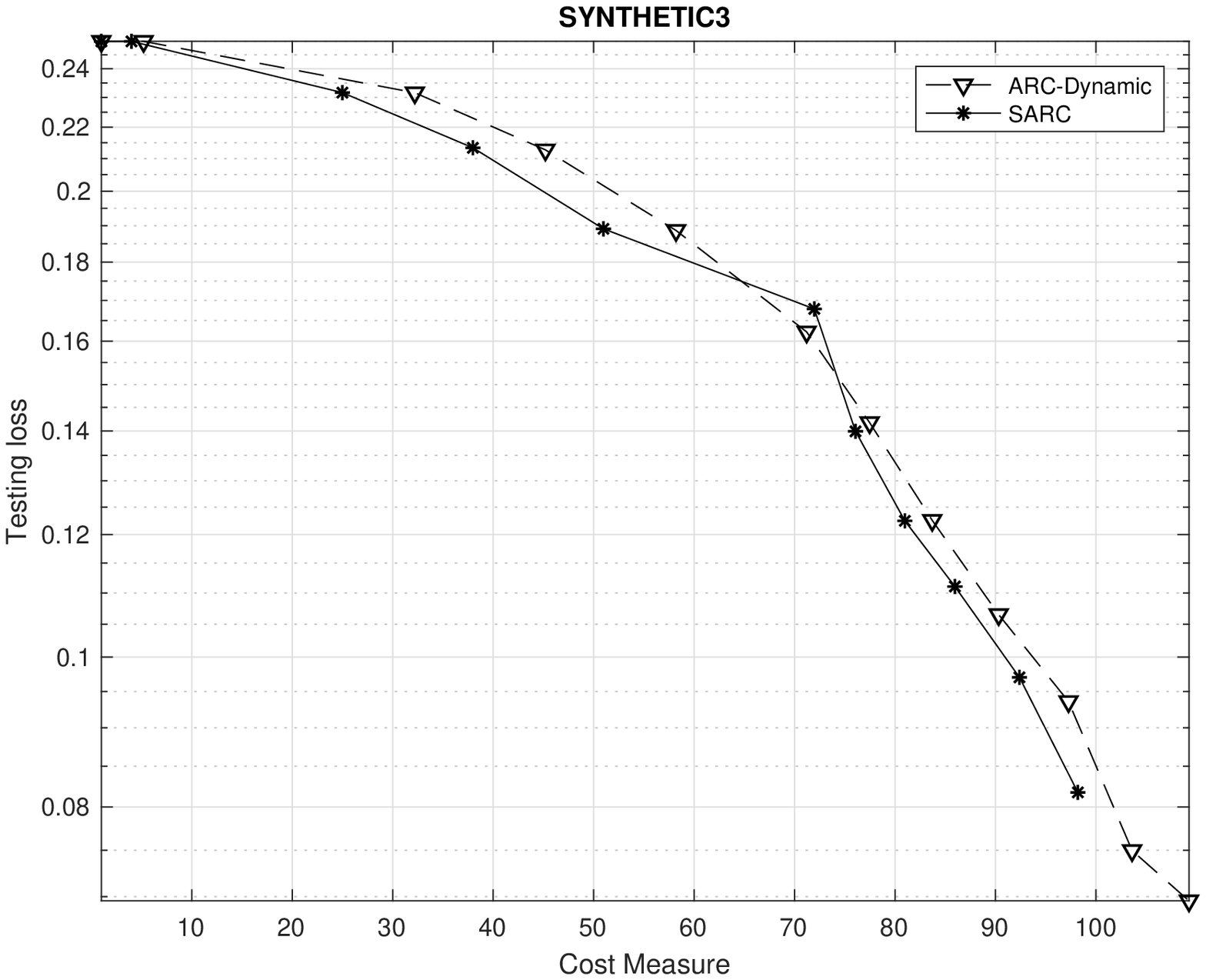}
\includegraphics[width=%
0.49\textwidth]{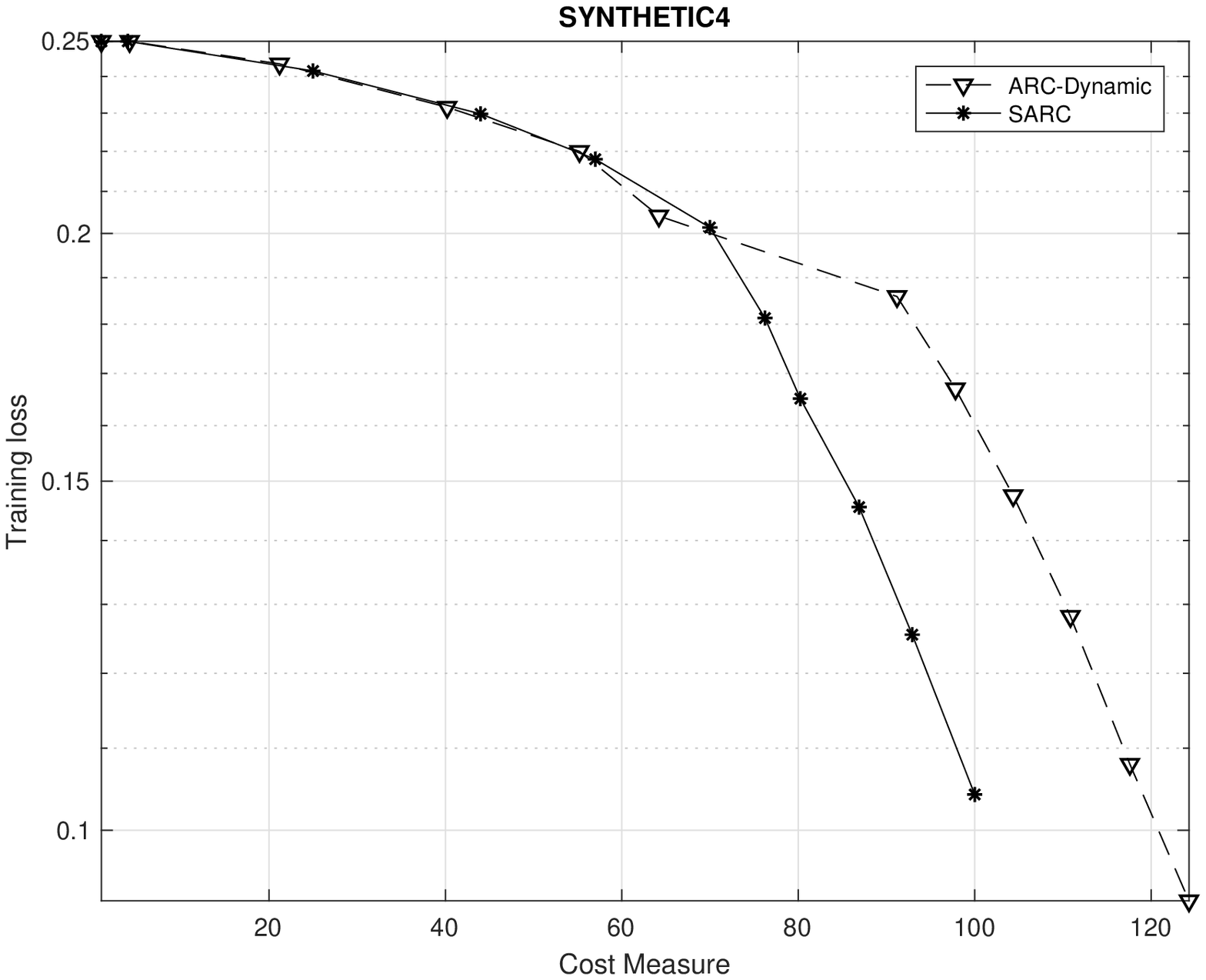}
\includegraphics[width=%
0.49\textwidth]{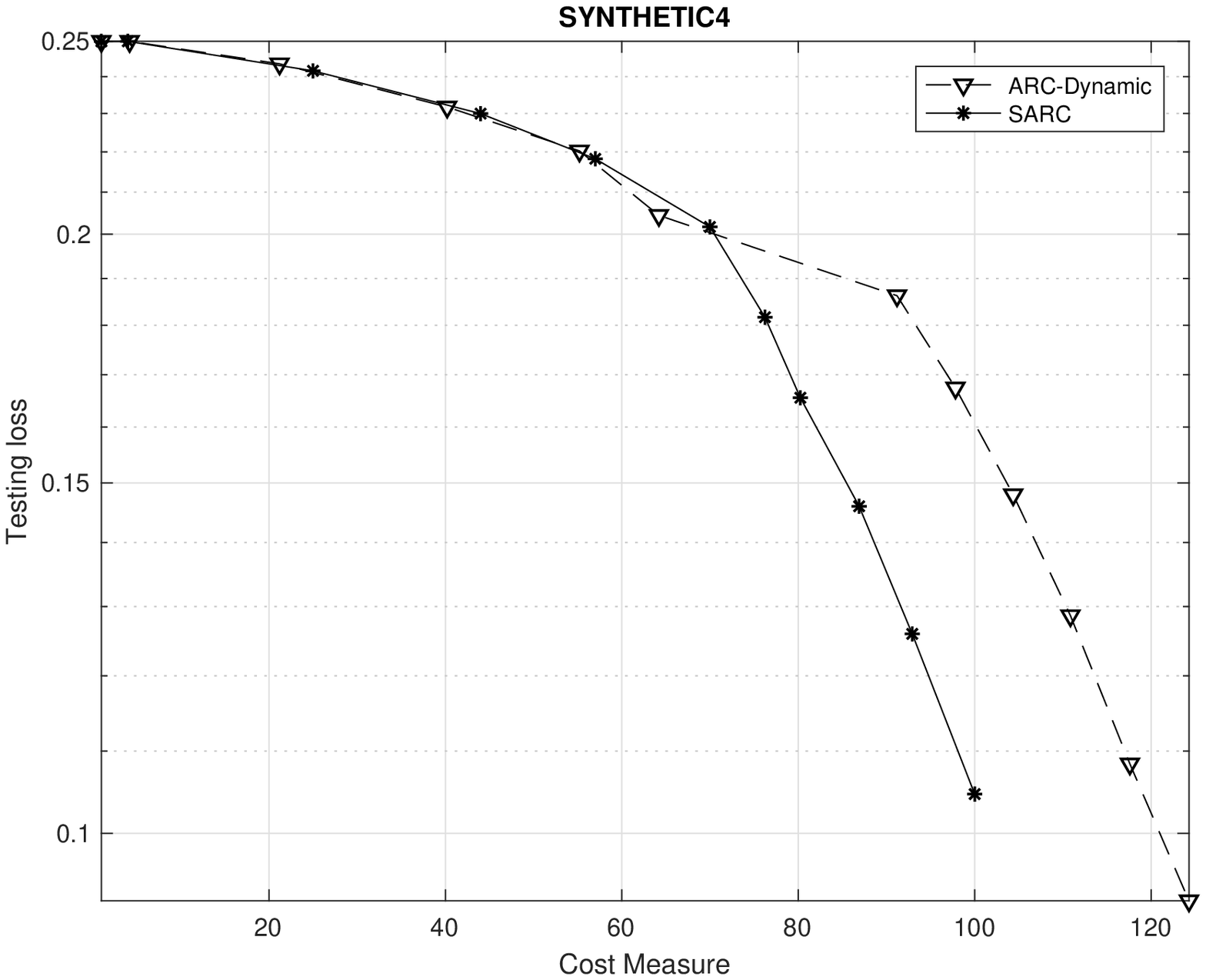}
\caption{Synthetic datasets. Comparison of \textit{SARC} (continuous line with asteriks) and \textit{ARC-Dynamic} (dashed line with triangles) against the considered cost measure CM. Each row corresponds to a different synthetic dataset. Training loss (left) and testing loss (right) against CM with logarithmic scale on the $y$ axis.}
\label{Perf1KL}
\end{figure}

\begin{figure}[h]
\centering
\includegraphics[width=%
0.49\textwidth]{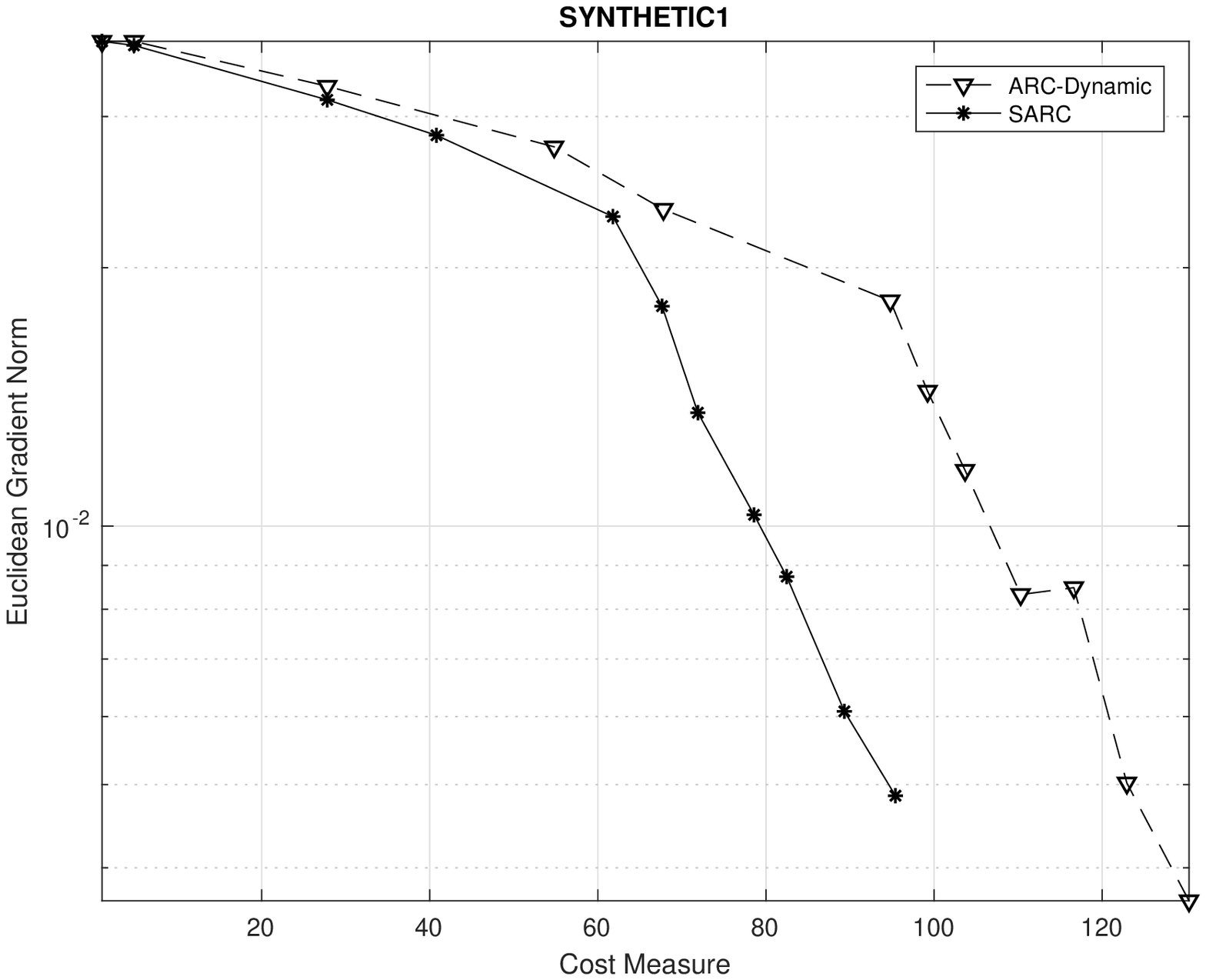}
\includegraphics[width=%
0.49\textwidth]{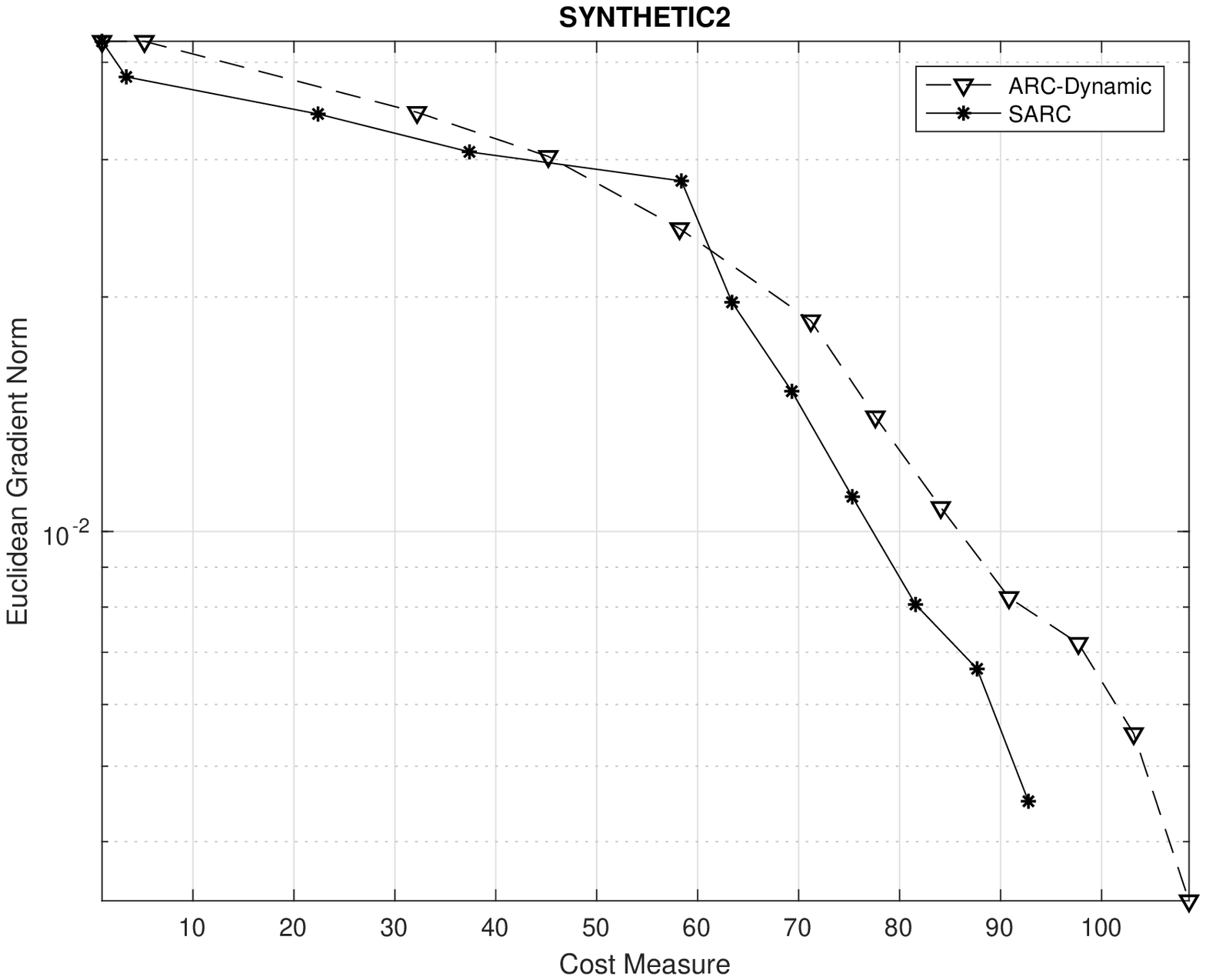}
\includegraphics[width=%
0.49\textwidth]{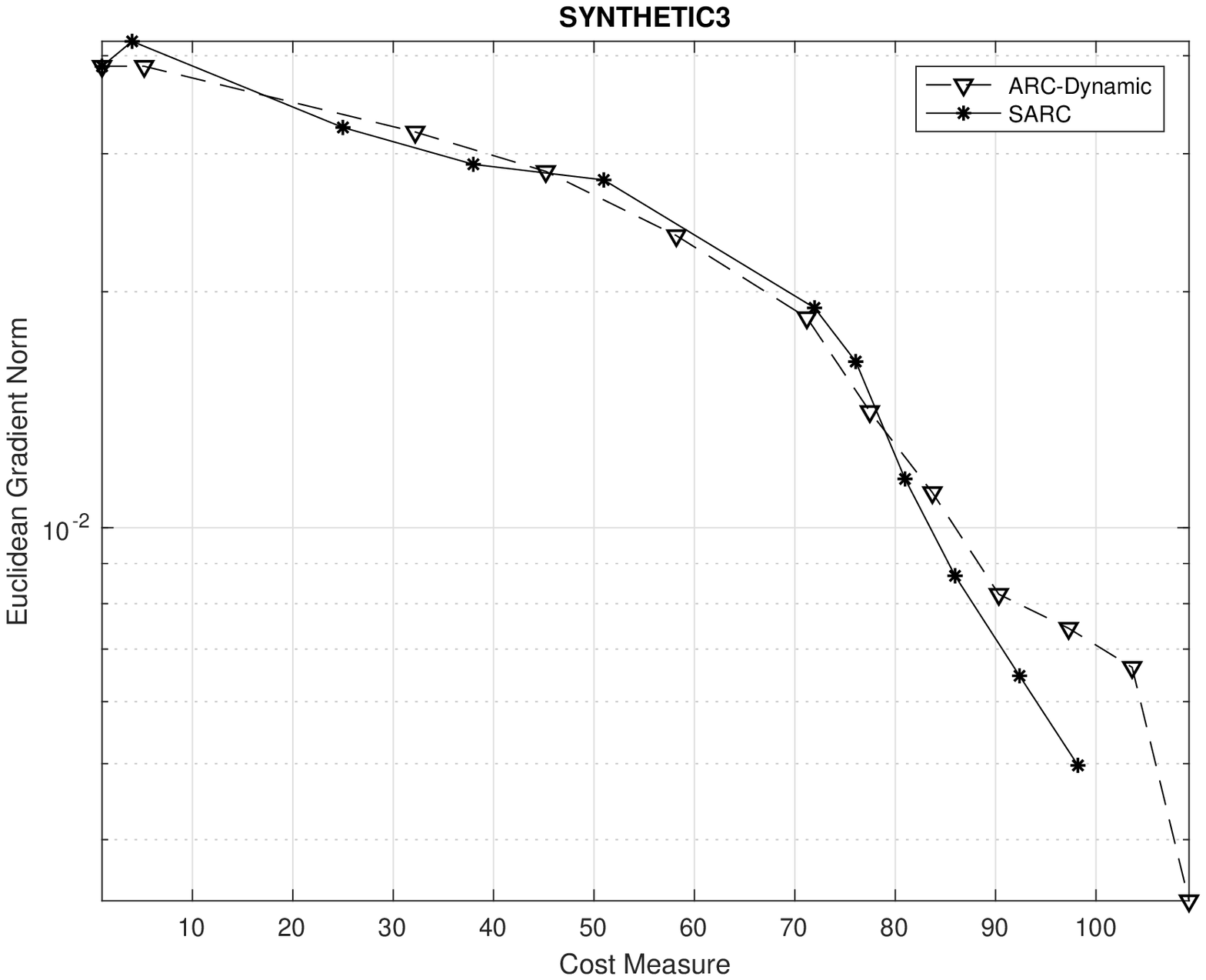}
\includegraphics[width=%
0.49\textwidth]{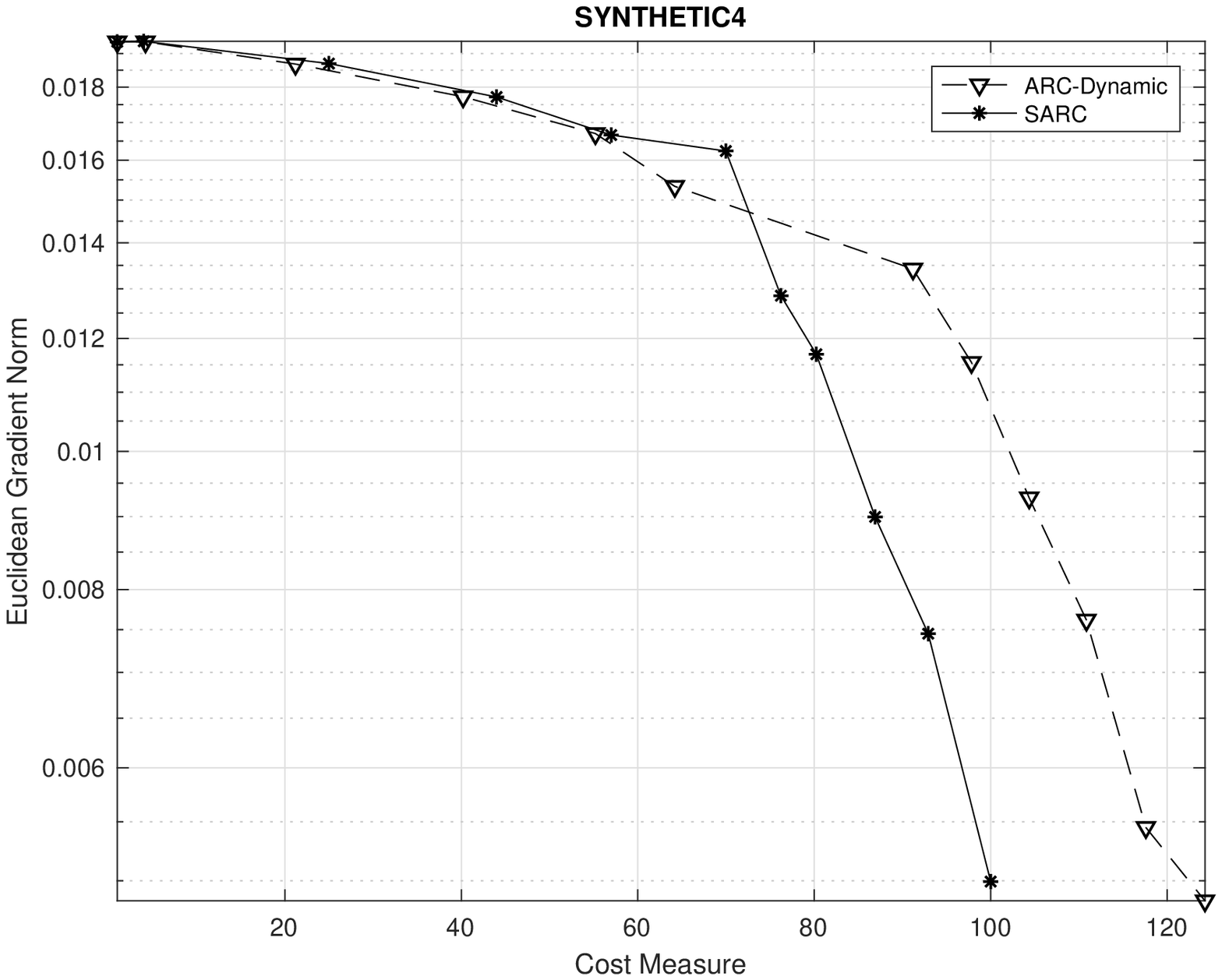}
\caption{Synthetic datasets. Euclidean norm of the gradient against CM (training set) with logarithmic scale on the $y$ axis. \textit{SARC} (continuous line with asteriks), \textit{ARC-Dynamic} (dashed line with triangles).
}
\label{GDsyntehticdata}
\end{figure}

\noindent
In all cases, Figure \ref{Perf1KL} shows the savings gained by $SARC$ in terms of the overall computational cost, as well as the improvements in the training phase and the testing accuracy under the same cost measure. More in general, we stress that second order methods show their strength  on these ill-conditioned datasets since all the tested procedures  manage to reduce the norm of the gradient and reach high accuracies in the classification rate. 
Even if we believe that reporting binary classifications accuracy obtained by each of the considered methods at termination is relevant in itself, we remark that the higher accuracy obtained at termination by \textit{ARC-Dynamic} (see Table \ref{BinAsyntheticdataset}) is just due to the fact the $SARC$ stops earlier. This should not be confused with a better performance of \textit{ARC-Dynamic}, since Figure \ref{Perf1KL} highlights that, along all datasets, when $SARC$ stops its testing loss is sensibly below the corresponding one performed by \textit{ARC-Dynamic} at the same CMT value. 

In Figure \ref{SampleSize}, we finally analyse the adaptive choices of the sample sizes  $\mathcal{D}_{j,k}$, $j\in\{1,2\}$, in \eqref{sizeD}.  As expected, the two strategies are more or less comparable when selecting the sample sizes for Hessian approximations, while the number of samples used to compute gradient approximations by $SARC$ oscillates across all iterations, always remaining far below the full sample size. In so doing, we outline that too small values of $\tau_0$ seem to have a bad influence on the performance of $SARC$, while as $\tau_0$ increases 
 it generally produces frequent saving in the CMT, once that it is above  a certain threshold value. In support of this observation, we report in Figure \ref{Figtau0} the variation of CMT against $\tau_0$  on Synthetic1 and Synthetic4.
We finally notice that, except for a few iterations at the first stage of the iterative process, the sample size for Hessian   approximation is lower than that used for  gradient approximation. This is in line with   the theory
as the  gradient is eventually required to be more accurate than the Hessian. In fact, the error in gradient approximation has to be of the order of $\|s_k\|^2$, while that in Hessian approximation has to be of the order of $\|s_k\|$,
see Lemma \ref{Lemmagk} and \ref{LemmaCk}.

\noindent
\begin{figure}[h]
\centering
\includegraphics[width=%
0.49\textwidth]{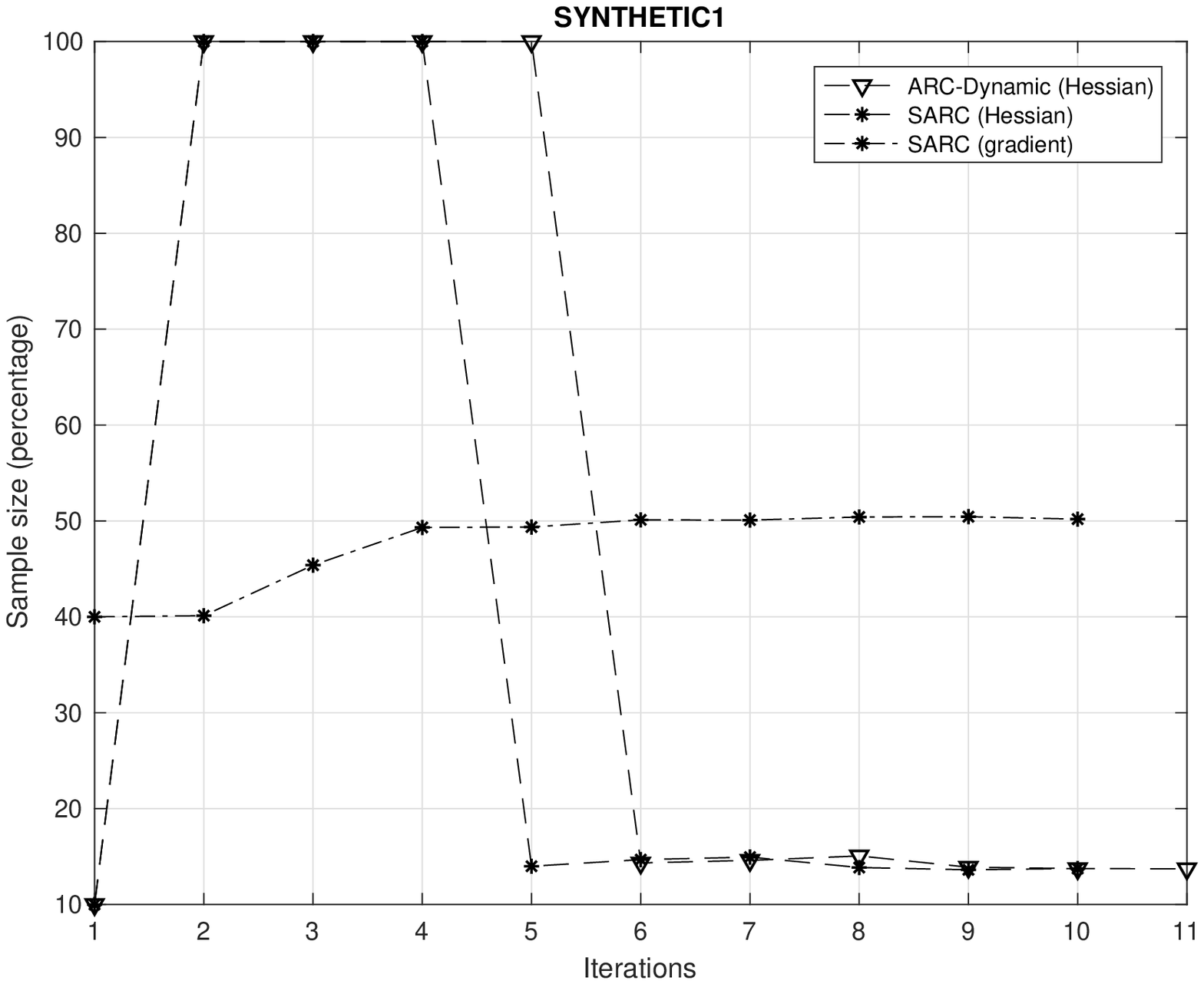}
\includegraphics[width=%
0.49\textwidth]{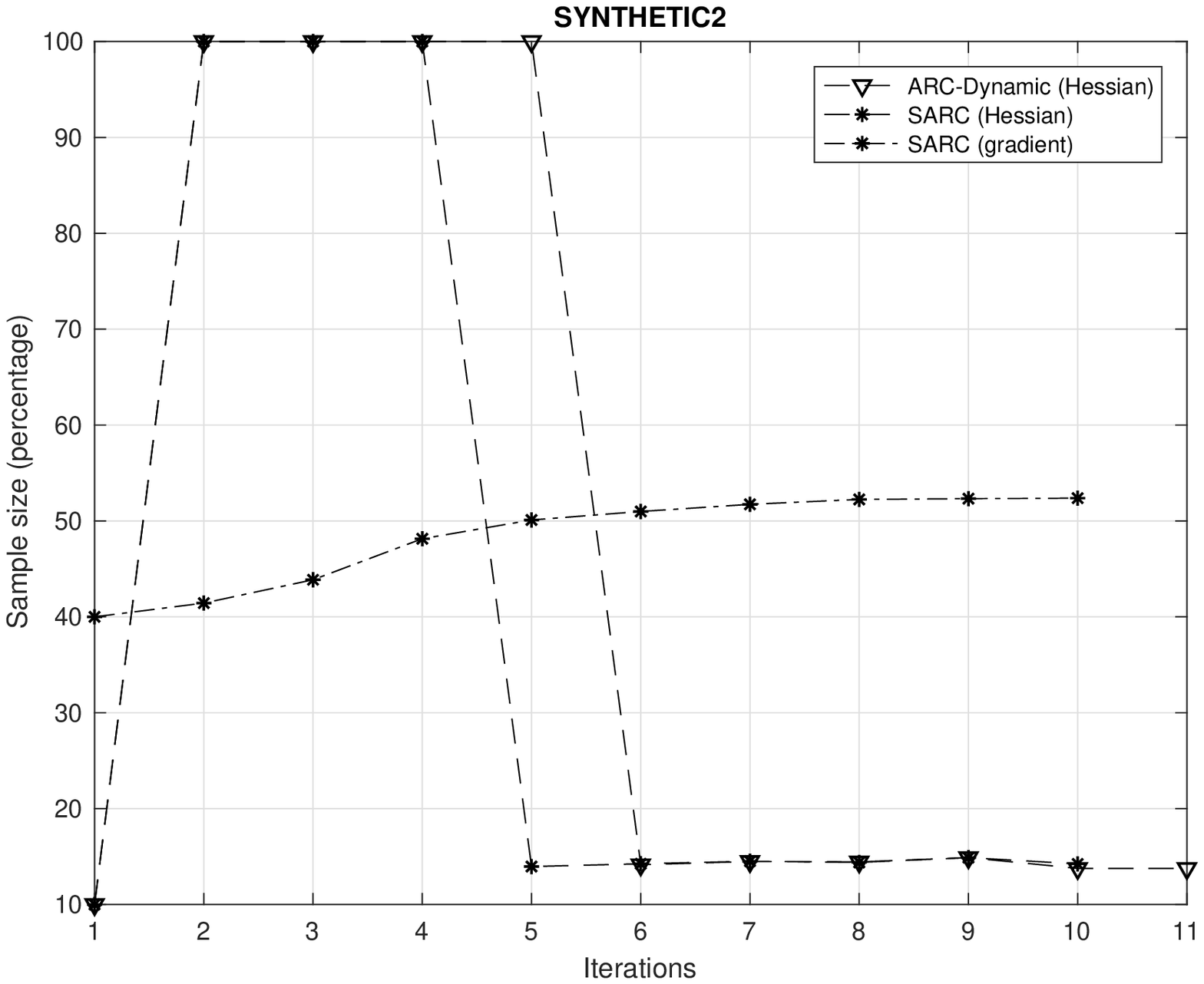}
\includegraphics[width=%
0.49\textwidth]{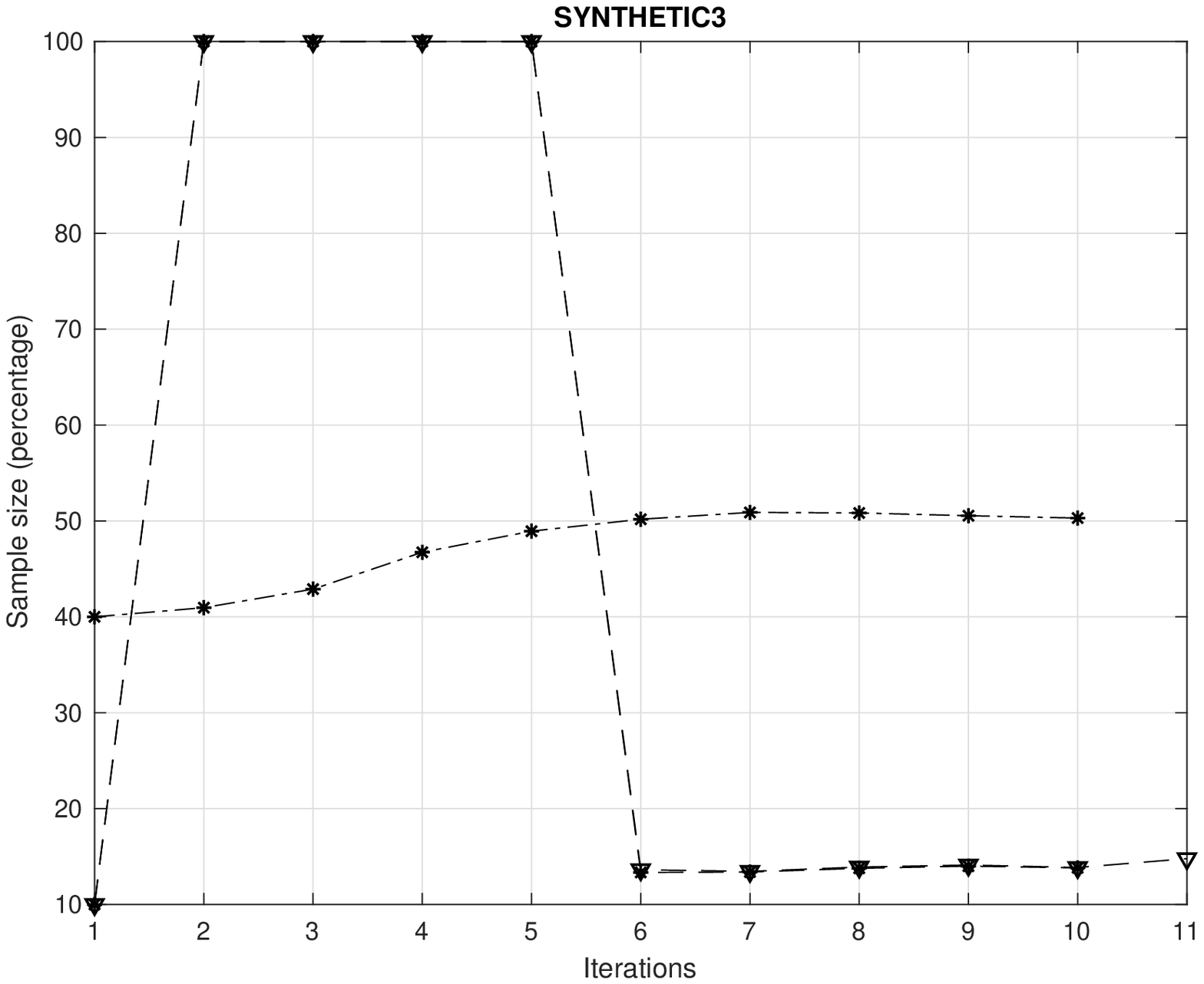}
\includegraphics[width=%
0.49\textwidth]{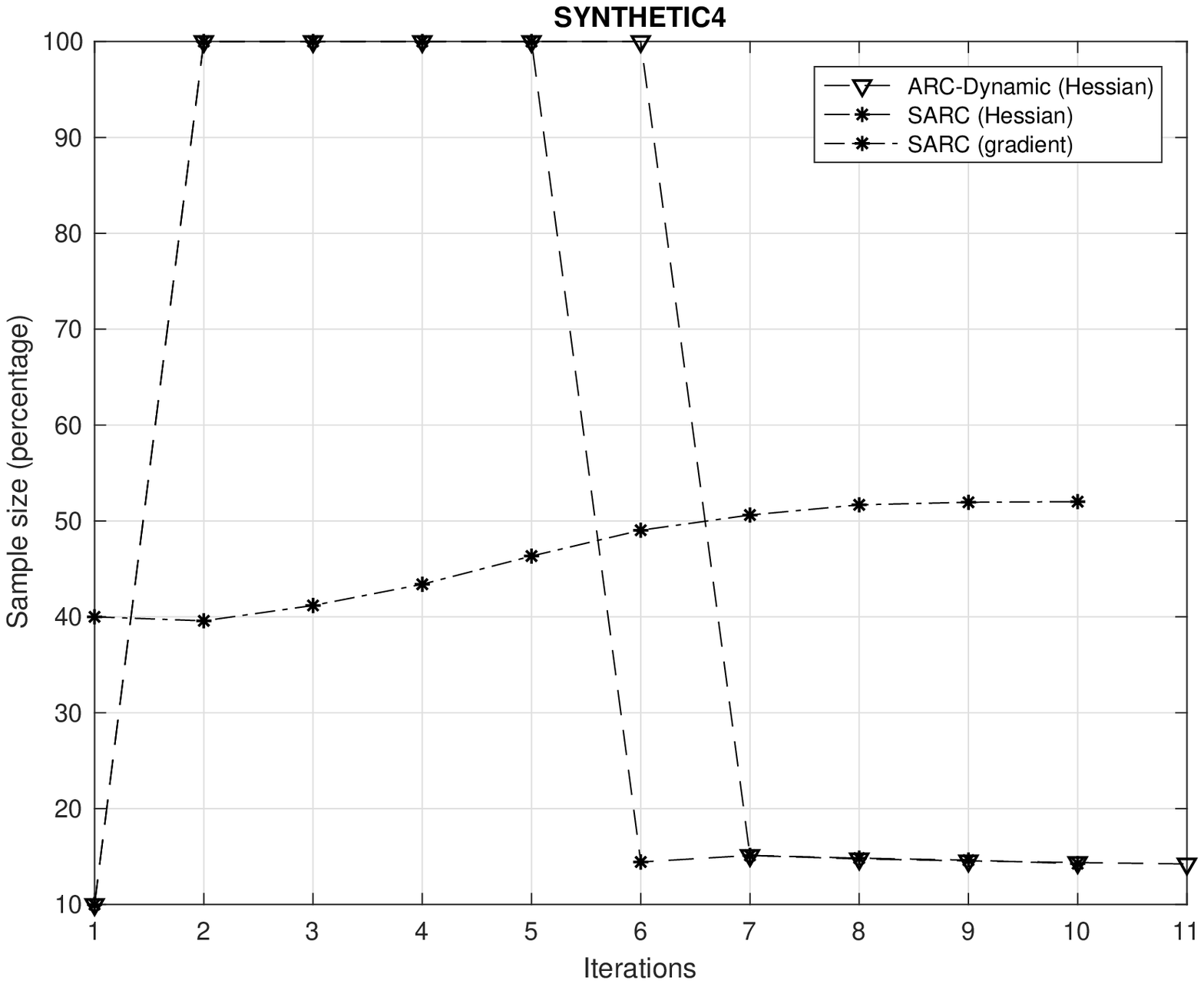}
\caption{Synthetic datasets. Sample size for Hessian approximations employed by \textit{ARC-Dynamic} (dashed line with triangles) and $SARC$ (dashed line with asteriks), together with the sample size for gradient approximations considered by $SARC$ (dotted dashed line with asteriks) against iterations.}
\label{SampleSize}
\end{figure}

\noindent
\begin{figure}[h]
\centering
\includegraphics[width=%
0.49\textwidth]{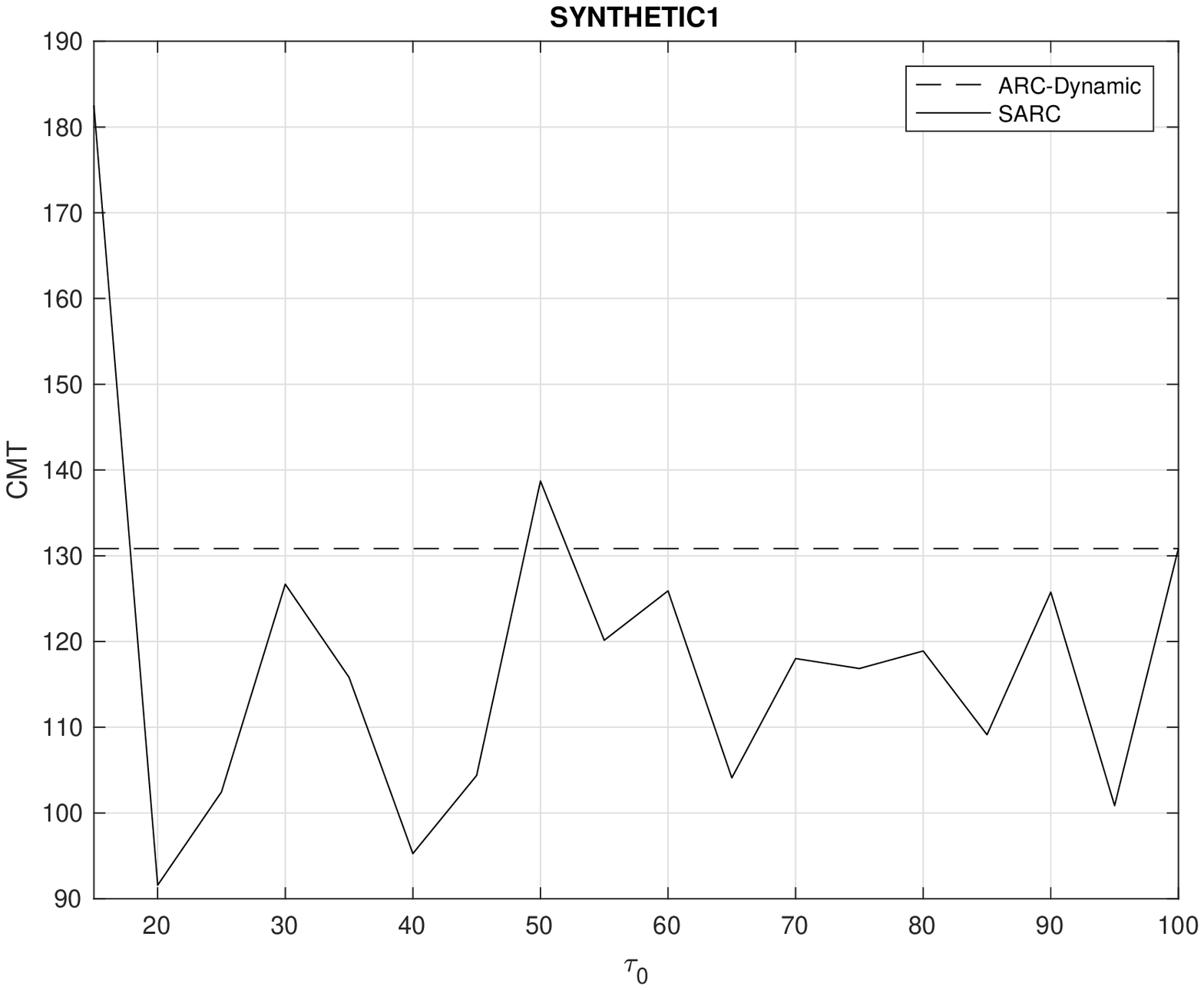}
\includegraphics[width=%
0.49\textwidth]{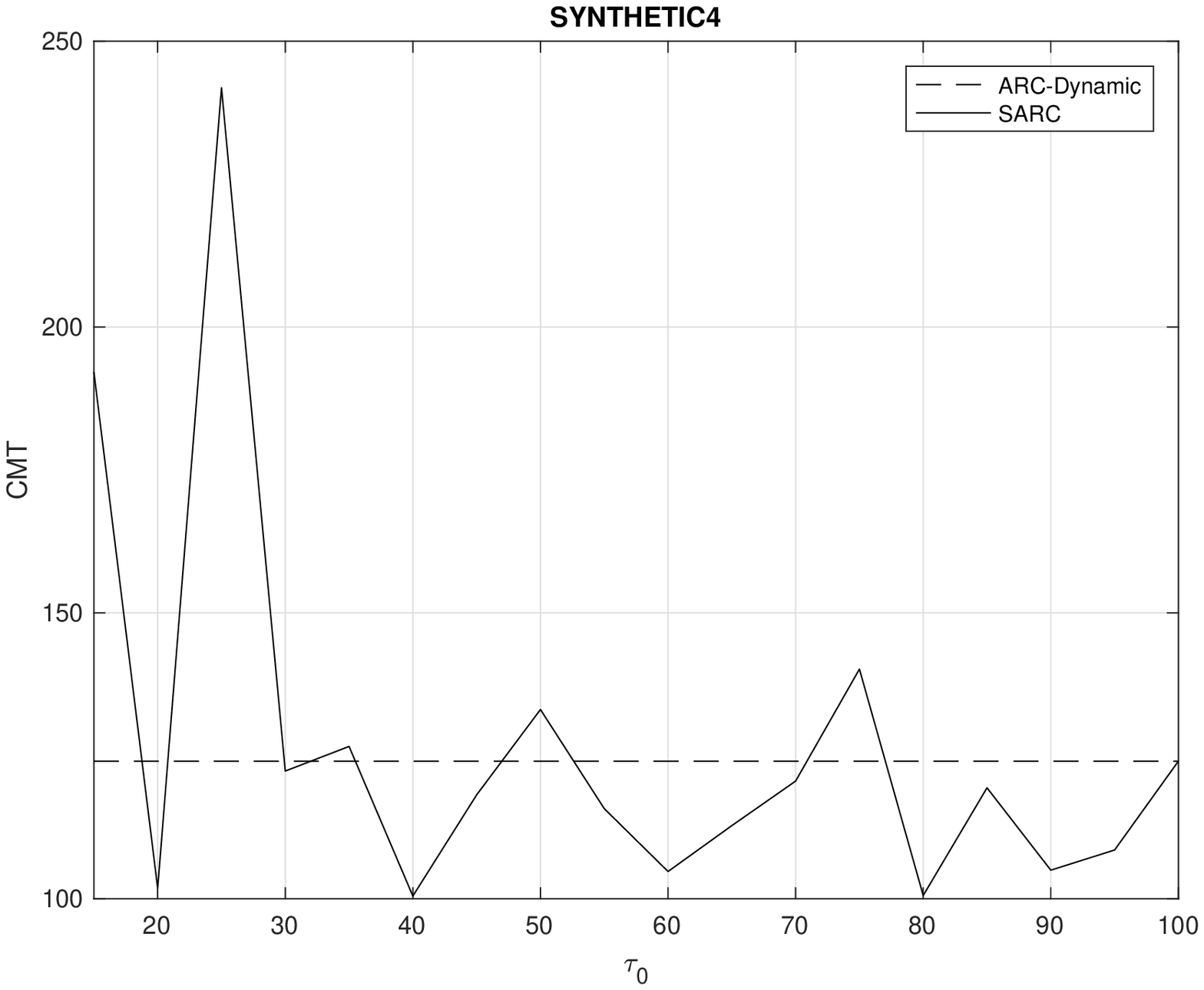}
\caption{Cost Measure at Termination (CMT) against $\tau_0$ among $SARC$ (continuous line) and \textit{ARC-Dynamic} (dashed line) on Synthetic1 and Synthetic4.}
\label{Figtau0}
\end{figure}

\section{Conclusion and perspectives} We have proposed the stochastic analysis of the process generated by an ARC algorithm for solving unconstrained, nonconvex, optimisation problems under inexact derivatives information. The algorithm is an extension of the one in \cite{IMA}, since it employs approximated evaluations of the gradient with the main feature of mantaining the dynamic rule for building Hessian approximations, introduced and numerically tested in \cite{IMA}. This kind of accuracy requirement is always reliable and computable when an approximation of the exact Hessian is needed by the scheme and, in contrast to other strategies such that the one in \cite{CartSche17}, does not require the inclusion of additional inner loops to be satisfied. With respect to the framework in \cite{IMA}, where in the finite-sum setting optimal complexity is restored with high probability, we have here provided properties of the method when the adaptive accuracy requirements of the derivatives involved in the model definition are not accomplished, with a view to search for the number of expected steps that the process takes to reach the prescribed accuracy level. The stochastic analysis is thus performed exploiting the theoretical framework given in \cite{CartSche17}, showing that the expected complexity bound matches the worst-case optimal complexity of the ARC framework. The possible lack of accuracy of the model has just the effect of scaling the optimal complexity we would derive from the deterministic analysis of the framework (see, e.g., \cite[Theorem~4.2]{IMA}), by a factor which depends on the probability $p$ of the model being sufficiently accurate. Numerical results confirm the theoretical achievements and highlight the improvements of the novel strategy on the computational cost  in most of the tests  with no worsening of the binary classification accuracy. This paper does not cover the case of noisy functions (\cite{PaquSche18, Chen15, ChenMeniSche18}), as well as the second-order complexity analysis. The stochastic second-order complexity analysis of ARC methods with derivatives and function estimates will be a challenging line of investigation for future work. Concerning the latter point, we remark that a recent advance in \cite{STR2}, based on properties of supermartingales, has tackled with the second-order convergence rate analysis of a stochastic trust-region method. 
  \vskip 5 pt
 \noindent
 {\bf Funding}: the authors are member of the INdAM Research Group GNCS and partially supported by INdAM-GNCS  through Progetti di Ricerca 2019. 

  \vskip 5 pt
 \noindent
  {\bf Acknowledgements.} The authors  dedicate  this paper, in honor of his 70th birthday,  to  Alfredo Iusem. Thanks are due  to Coralia Cartis, Benedetta Morini and Philippe Toint for fruitful discussion on stochastic complexity analysis and to two anonymous referees whose comments significantly improved the presentation of this paper.

\end{document}